\author{Jieliang Hong}
\title{Local behavior of local times of super Brownian motion}
\newtheorem{theorem}{Theorem}
\newtheorem{lemma}{Lemma}
\newtheorem{proposition}{Proposition}
\newtheorem{corollary}{Corollary}
\newcommand{\R}{\mathbb{R}}
\newcommand{\cF}{\mathcal F}
\begin{document}

\maketitle
\begin{abstract}
    For $x\in \R^d- \{0\}$, in dimension $d=3$, we study the asymptotic behavior of the local time $L_t^x$ of super-Brownian motion $X$ starting from $\delta_0$ as $x \to 0$. Let $\psi(x)=((1/2\pi^2) \log (1/|x|))^{1/2}$ be a normalization, Theorem 1 implies that $(L_t^x-(1/2\pi|x|))/\psi(x)$ converges in distribution to a standard normal distributed random variable as $x\to 0$. For dimension $d=2$, Theorem 2 implies that $L^x_t-(1/\pi)\log(1/|x|)$ is $L^1$ bounded as $x\to 0$. To do this, we prove a Tanaka formula for the local time which refines a result in Barlow, Evans and Perkins [1].
\end{abstract}

\section{Introduction and main results}

\subsection{Introduction}
Super Brownian Motion arises as a scaling limit of critical branching random walk. Let $M_F=M_F(\R^d)$ be the space of finite measures on $\R^d$ equipped with Borel $\sigma$- algebra $\mathfrak{B}(\R^d)$ and $(\Omega,\cF,\cF_t,P)$ be a filtered probability space. The $\mathit{Super}$-$\mathit{Brownian}$ $\mathit{ Motion}$ $X$ starting at $\mu\in M_F(\R^d)$ is a continuous $M_F(\R^d)$-valued adapted strong Markov process defined on $(\Omega,\cF,\cF_t,P)$ with $X_0=\mu$ a.s. which is the unique in law solution of a martingale problem (see (1) below).\\
For $0\leq t<\infty$, the weighted occupation time process is defined to be \[Y_t(A):=\int_0^t X_s(A) ds, \ A\in \mathfrak{B}(\R^d).\] If $\mu$ is a measure on $\R^d$ and $\psi$ is a real-valued function on $\R^d$, we write $\mu(\psi)$ for $\int_{\R^d} \psi(y) d\mu(y)$.\\
Local times of superprocesses have been studied by many authors. Sugitani [7] has proved that given the joint continuity of $\mu q_t(x)=\int \mu(dy) \int_0^t p_s(x-y) ds$ in $(t,s)$, the local time $L_t^x$ has a jointly continuous version which satisfies that for any $\phi \in C_b(\R^d)$, 
\[\int_0^t X_s(\phi) ds =\int_{\R^d}  L_t^x \phi(x) dx.\] $L_t^x$ is called the local time of $X$ at point $x\in \R^d$ and time $t>0$ and it also can be defined as \[L_t^x:=\lim_{\epsilon \to 0} \int_0^t X_s(p_\epsilon^x) ds,\] where $p_\epsilon^x(y)=p_\epsilon(y-x)$ is the transition density of Brownian motion. In general, for any fixed $\epsilon>0$, $L_t^x-L_\epsilon^x$ is jointly continuous in $t\geq \epsilon$ and $x\in \R^d$. \\
However, the condition of continuity of $\mu q_t(x)$ fails in $x=0$ when $\mu=\delta_0$ in $d=2$ and $d=3$ (joint continuity still holds for $L_t^x-L_\epsilon^x$). Our main result Theorem 1 gives precise information about the local behavior of local times of super-Brownian motion in dimension $d=3$. Let $x\in \R^d -\{0\}$ and $X$ be a super-Brownian motion initially in $\delta_0$, and $L_t^x$ be the local time of $X$ at time $t$ and point $x$. Theorem 1 tells us that as $x \to 0$ $L_t^x$ blows up like $1/|x|$  and has a variation like $\sqrt[]{\log 1/|x|}$. We can view this as an analogue to the classical Central Limit Theorem. For $d=2$, we derive a refined Tanaka formula in Proposition 3 compared to the one in [1] and Theorem 2 tells us that $\big|L_t^x-\frac{1}{\pi} \log 1/|x|\big|$ is $L^1$ bounded.

\subsection{Notations and Properties of super-Brownian motion}
We denote by $p_t(x)=(2\pi t)^{-d/2} e^{-|x|^2/2t}, t>0, x\in \R^d$ the transition density of d-dimensional Brownian motion $B_t$. Let $P_t$ be the corresponding Markov semigroup, then for any function $\phi$, \[P_t \phi(x)=\int p_t(y) \phi(x-y) dy.\]
Let $C_b^2(\R^d)$ denotes the set of all twice continuously differentiable functions on $\R^d$ with bounded derivatives of order less than 2. It is known that super-Brownian motion $X$ solves a martingale problem (Perkins [5], II.5): For any $ \phi \in C_b^2(\R^d)$, 
\begin{equation}
X_t(\phi)=X_0(\phi)+M_t(\phi)+\int_0^t X_s(\frac{\Delta}{2}\phi) ds,
\end{equation} where $M_t(\phi)$ is an $\cF_t$ martingale such that $M_0(\phi)=0$ and the quadratic variation of $M(\phi)$ is \[[M(\phi)]_t=\int_0^t X_s(\phi^2) ds.\]

For the first two moments of Super-Brownian motion, Konno and Shiga [4] gives us \[E_{X_0} X_t(\phi)=X_0(P_t\phi),\] and \[E_{X_0} \Big(X_t(\phi)^2\Big)=\Big(X_0(P_t\phi)\Big)^2+\int_0^t X_0\Big(P_s\big((P_{t-s}\phi)^2\big)\Big) ds.\]
We drop the subscript $X_0$ when there is no confusion.\\
\\
\noindent $\mathbf{Notations.}$ $c_3=1/2\pi$, $c_{3.1}=2c_3^2=1/2\pi^2$, $c_2=1/\pi$. The weird order here is to emphasize the dimension the constant is for.

\subsection{Main result}

\begin{theorem}(d=3)
Let $\psi(|x|)=(c_{3.1} \log 1/|x|)^{1/2}$, and $X$ be a super-Brownian motion in $\mathbb{R}^3$ with initial value $\delta_0$. Then for each $0<t\leq \infty$ as $x\to 0$, we have 

\begin{equation*}
\Big(X,\frac{L_t^x-c_3 \frac{1}{|x|}}{\psi(|x|)}\Big) \xrightarrow[]{d} \Big(X,Z\Big)
\end{equation*}
where $Z$ is a random variable with standard normal distribution and independent of $X$. Moreover, convergence in probability fails.
\end{theorem}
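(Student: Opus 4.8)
The plan is to derive a Tanaka formula expressing $L_t^x$ as a deterministic constant plus a martingale plus an asymptotically negligible remainder, and then to obtain the asymptotic normality from a stable martingale central limit theorem, the point being that the martingale's quadratic variation becomes \emph{deterministic} after normalization, which forces the Gaussian limit to be independent of $X$.

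First I would set up the Tanaka formula. Let $G(y)=c_3/|y|=\int_0^\infty p_s(y)\,ds$ be the Green's function of $\tfrac{1}{2}\Delta$ on $\R^3$, and for $\epsilon>0$ put $G^{x,\epsilon}(y)=\int_\epsilon^\infty p_s(y-x)\,ds$, which lies in $C_b^2(\R^3)$ and satisfies $\tfrac12\Delta G^{x,\epsilon}=-p_\epsilon^x$. Feeding $G^{x,\epsilon}$ into the martingale problem (1) gives $\int_0^t X_s(p_\epsilon^x)\,ds=X_0(G^{x,\epsilon})+M_t(G^{x,\epsilon})-X_t(G^{x,\epsilon})$; letting $\epsilon\downarrow0$, using the definition of $L_t^x$ together with $X_0=\delta_0$ and $G^{x,\epsilon}(0)\to G(x)=c_3/|x|$, yields
\[
L_t^x-c_3\frac{1}{|x|}=M_t(G^x)-X_t(G^x),
\]
where $M(G^x)$ is the martingale measure integral of $G^x(y)=G(y-x)$ with $[M(G^x)]_t=\int_0^t X_s((G^x)^2)\,ds$, and $X_t(G^x)=\int G^x\,dX_t$. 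This is the refinement of [1] isolated in Proposition 3: the remainder is the explicit term $X_t(G^x)$, whose mean $P_t(G^x)(0)=\int p_t(y)G(y-x)\,dy$ stays bounded as $x\to0$, so $X_t(G^x)/\psi(|x|)\to0$ in $L^1$ and the whole problem reduces to the martingale $M_t(G^x)/\psi(|x|)$.

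Next I would fix the scale of the quadratic variation. By the second moment formula, $\E[[M(G^x)]_t]=\int_0^t P_s((G^x)^2)(0)\,ds=\tfrac{1}{(2\pi)^2}\int G_t(y)|y-x|^{-2}\,dy$ with $G_t(y)=\int_0^t p_s(y)\,ds$; the logarithmic divergence comes from spatial scales $|x|\lesssim|y|\lesssim\sqrt t$, and a direct estimate gives $\E[[M(G^x)]_t]\sim c_{3.1}\log(1/|x|)=\psi(|x|)^2$, which is exactly why $\psi$ is the correct normalization. The central and most delicate step is to upgrade this to the concentration statement $[M(G^x)]_t/\psi(|x|)^2\to1$ in probability. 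Heuristically the quadratic variation is built from $\sim\log(1/|x|)$ roughly independent dyadic space-time scales each contributing an $O(1)$ amount, so its variance should be only of order $\log(1/|x|)=o(\psi(|x|)^4)$. Making this rigorous through the covariance formula $\mathrm{Cov}(X_s(h),X_{s'}(h))=\int_0^s P_u[(P_{s-u}h)(P_{s'-u}h)](0)\,du$ with $h=(G^x)^2$, and controlling the strong $|y-x|^{-2}$ singularity under the smoothing semigroups $P_r$ (for which $P_r h(y)\asymp(\max(|y-x|^2,r))^{-1}$), is the main obstacle I expect; the key is to prove that all the resulting space-time integrals are $O(\log(1/|x|))$ rather than $(\log(1/|x|))^2$.

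Once concentration is established, the conclusion follows from the stable central limit theorem for continuous local martingales: since $M(G^x)$ is $\cF_t$-adapted with $[M(G^x)]_t/\psi(|x|)^2\to1$ in probability, the time-changed representation $M_t(G^x)=W_{[M(G^x)]_t}$ shows $M_t(G^x)/\psi(|x|)\Rightarrow N(0,1)$ $\cF$-stably, giving $(X,(L_t^x-c_3/|x|)/\psi(|x|))\xrightarrow{d}(X,Z)$ with $Z\sim N(0,1)$ independent of $X$; the independence is precisely the content of stability together with the deterministic limiting variance. For $t=\infty$ I would use that $X$ dies out in finite time in $d=3$, so the integrals are effectively over a finite horizon and the same estimates apply. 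Finally, to show convergence in probability fails, I would choose sequences $x_n,y_n\to0$ with $\log(1/|y_n|)/\log(1/|x_n|)\to\infty$ and verify that the cross term $\langle M(G^{x_n}),M(G^{y_n})\rangle_t/(\psi(|x_n|)\psi(|y_n|))\to0$, so that the two normalized martingales are asymptotically independent standard normals by a joint stable CLT; if $(L_t^x-c_3/|x|)/\psi(|x|)$ converged in probability the difference of the two sequences would tend to $0$ in probability, whereas it converges in law to $N(0,2)\neq0$, a contradiction.
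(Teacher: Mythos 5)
Your overall architecture is the same as the paper's: Tanaka formula $L_t^x-c_3/|x|=M_t(\phi_x)-X_t(\phi_x)$ with $\phi_x(y)=c_3/|y-x|$, negligibility of $X_t(\phi_x)/\psi$, concentration of the quadratic variation at $c_{3.1}\log(1/|x|)$, and a Dubins--Schwarz/martingale CLT to finish. But the one step you yourself flag as ``the main obstacle'' --- proving $[M(\phi_x)]_t/(c_{3.1}\log(1/|x|))\to 1$ in probability --- is exactly the heart of the proof, and in your write-up it is only a heuristic about dyadic scales plus an uncarried-out covariance computation for $\int_0^t X_s(|\cdot-x|^{-2})\,ds$. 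As it stands this is a genuine gap: nothing in your argument actually bounds the variance of the quadratic variation. The paper closes it with a different device that you should note, because it makes the estimate almost free: since $\tfrac12\Delta_y\log|y-x|=|y-x|^{-2}$ in $\R^3$, an It\^o/martingale-problem identity for $X_t(g_x)$ with $g_x(y)=\log|y-x|$ (Proposition 1) gives the \emph{exact} formula $[M(\phi_x)]_t=c_{3.1}\bigl(\log(1/|x|)+X_t(g_x)-M_t(g_x)\bigr)$, so the fluctuation of $[M(\phi_x)]_t$ around $c_{3.1}\log(1/|x|)$ is controlled by second moments of $X_t(g_x)$ and $M_t(g_x)$, which involve only logarithmic (hence locally integrable, uniformly in $x$) singularities and stay bounded as $x\to0$ (Lemma 3). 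This replaces your hard $|y-x|^{-2}$ second-moment analysis by an easy one and in fact shows the variance is $O(1)$, not merely $o((\log(1/|x|))^2)$. If you insist on the direct route, you must actually carry out the estimate $\int_0^{s\wedge s'}P_u[(P_{s-u}h)(P_{s'-u}h)](0)\,du$ with $h=(G^x)^2$ and integrate over $s,s'$; it is doable but substantially messier.

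The remaining pieces of your proposal are correct but genuinely different from the paper, and worth comparing. For the joint convergence and independence from $X$ you invoke the stable (mixing) martingale CLT: since the normalized quadratic variation tends to a deterministic constant, the limit is independent of the ambient $\sigma$-field. That is a legitimate shortcut provided you cite and verify an actual stable-CLT statement; the paper instead proves independence by hand, replacing $Z_t^{x}$ by $Z_\epsilon^{x}$ via Sugitani's continuity of $L_t^\cdot-L_\epsilon^\cdot$, conditioning on $\cF_\epsilon$, using the Markov property, and letting $\epsilon\to0$ with the Blumenthal $0$--$1$ law. For the failure of convergence in probability, your two-scale argument (choose $|x_n|,|y_n|$ with $\log(1/|y_n|)/\log(1/|x_n|)\to\infty$, check the normalized cross-variation vanishes, conclude the difference tends to $N(0,2)$) is valid --- the cross-variation estimate does come out as $O(\sqrt{\log(1/|x_n|)/\log(1/|y_n|)})$ --- but it requires a bivariate martingale CLT; the paper's argument is shorter: convergence in probability would force the limit $Z$ into $\cF_{0+}^X$, hence a.s.\ constant by Blumenthal, contradicting $Z\sim N(0,1)$. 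Finally, a small slip: the refined Tanaka formula you attribute to ``Proposition 3'' is the $d=2$ statement; in $d=3$ the paper simply cites Theorem 6.1 of [1], and your Green's-function approximation $G^{x,\epsilon}=\int_\epsilon^\infty p_s(\cdot-x)\,ds$ is a reasonable sketch of how one would prove it.
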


\begin{theorem}(d=2)
Let $X$ be a super-Brownian motion in $\mathbb{R}^2$ with initial value $\delta_0$. Then we have 
\[\limsup_{x \to 0} E\Big|L_t^x-c_2 \log\frac{1}{|x|}\Big | <\infty. \]
\end{theorem}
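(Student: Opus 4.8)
The plan is to deduce Theorem 2 from the refined Tanaka formula of Proposition 3. Writing $G^x(y)$ for the (regularized) logarithmic potential centred at $x$ that appears there, Proposition 3 expresses the local time in the schematic form
\[
L_t^x = X_0(G^x) - X_t(G^x) + M_t(G^x) + R_t^x ,
\]
where $M_t(G^x)$ is the martingale produced by applying the martingale problem (1) to $G^x$, and $R_t^x$ collects the lower-order deterministic and drift corrections forced by the regularization. Since $X_0=\delta_0$, the leading term is $X_0(G^x)=G^x(0)=c_2\log(1/|x|)+O(1)$ as $x\to0$, which is exactly the centring in the theorem. Hence it suffices to prove that each of $E|X_t(G^x)|$, $E|M_t(G^x)|$ and $E|R_t^x|$, together with the $O(1)$ discrepancy in the leading term, stays bounded as $x\to0$; the conclusion then follows from the triangle inequality. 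Because we only need $L^1$-boundedness and not a limit, crude uniform bounds on each piece will suffice.

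The state term and the drift part of $R_t^x$ are handled by the first-moment formula $E[X_s(f)]=X_0(P_sf)$. For the state term this gives $E[X_t(G^x)]=P_tG^x(0)$, an integral of $G^x$ against the smooth kernel $p_t$; for the drift correction $\int_0^t X_s(\cdot)\,ds$ one integrates the same identity in $s$. In each case the only singularity of $G^x$ is the logarithmic one at $y=x$, which is locally integrable in $d=2$ and is smoothed against the bounded kernel $p_s$, so the resulting integrals are continuous and bounded in $x$ as $x\to0$, and these terms are $L^1$-bounded.

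The crucial estimate is the second moment of the martingale. By the quadratic variation in (1) and the first-moment formula,
\[
E\big[M_t(G^x)^2\big]=E\Big[\int_0^t X_s\big((G^x)^2\big)\,ds\Big]=\int_0^t\!\!\int p_s(y)\,(G^x(y))^2\,dy\,ds .
\]
Near $y=x$ one has $(G^x(y))^2\sim c_2^2(\log(1/|y-x|))^2$, and the point special to $d=2$ is that every power of the logarithm is locally integrable, since $\int_0^\delta r(\log(1/r))^k\,dr<\infty$. Interchanging the order of integration and using that $\int_0^t p_s(y)\,ds$ has at most a logarithmic singularity at the origin, the double integral converges to a finite limit as $x\to0$ and is therefore bounded; Cauchy--Schwarz then gives $E|M_t(G^x)|\le (E[M_t(G^x)^2])^{1/2}$ bounded, completing the proof.

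I expect the genuine obstacle to be twofold. First, one must pin down the precise form of the correction term $R_t^x$ in Proposition 3 and verify that it is uniformly integrable; this is where the difference between the bare logarithmic potential (which is singular and unbounded, so not admissible in (1)) and its regularization is felt. Second, the second-moment bound is borderline: in $d=2$ the integral $\int_0^t\!\int p_s(y)(G^x(y))^2\,dy\,ds$ is finite precisely because $(\log)^2$ sits at the edge of local integrability, whereas the analogous computation in $d=3$ replaces $(\log(1/|y-x|))^2$ by $1/|y-x|^2$ and diverges like $\log(1/|x|)$, which is exactly the variance growth responsible for the Gaussian normalization $\psi$ in Theorem 1.
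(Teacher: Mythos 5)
Your proposal is correct and follows essentially the same route as the paper: apply the $d=2$ Tanaka formula of Proposition 3, identify the centring $c_2\log(1/|x|)$ with $-c_2\delta_0(g_x)$, and control $E|X_t(g_x)|$ and $E|M_t(g_x)|$ via the first-moment formula and the quadratic variation plus Cauchy--Schwarz. The only (harmless) differences are that the correction term $R_t^x$ you hedge about is actually absent --- Proposition 3 is an exact identity, with the regularization's drift term converging precisely to $\pi L_t^x$ --- and that the paper bounds $\int_0^t\int p_s(y)(\log|y-x|)^2\,dy\,ds$ by the pointwise inequality $(\log u)^2\le u^{-1}\mathbf{1}_{\{u<1\}}+u\mathbf{1}_{\{u\ge1\}}$ together with Proposition 2, rather than by your direct local-integrability argument for powers of the logarithm, which also works.
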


\section{Proof of Theorem 1}

Fix $x\in \R^3-{\{0\}}$, we will use the Tanaka formula for local times of super-Brownian motion (see [1], Theorem 6.1). Let $\phi_x(y)=c_3/|y-x|$, under the assumption $X_0(\phi_x)=\delta_0(\phi_x)=c_3/|x|<\infty,$ we have $P_{\delta_0}-$ almost surely that

\begin{equation}
L_t^x=c_3 \frac{1}{|x|}+M_t(\phi_x)-X_t(\phi_x),
\end{equation}
where $M_t(\phi_x)$ is an $\mathcal{F}_t$ martingale, with $M_0(\phi_x)=0$ and quadratic variation
\begin{equation}
[M(\phi_x)]_t=\int_0^t X_s(\phi_x^2) ds=\int_0^t \int \frac{c_3^2}{|y-x|^2} X_s(dy) ds.
\end{equation}
To prove Theorem 1, we need several propositions which are stated below and proofs of them will be shown in Section 2.2 after finishing the proof of Theorem 1.\\
\\
\noindent $\mathbf{Notations.}$ We define $g_x(y):=\log|y-x|$ for $x,y \in \R^3$.
% \begin{proposition}
% $X_t(\frac{1}{|y-x|})<\infty$ for all $t>0$.
% \end{proposition}

\begin{proposition}
For $d=3$, we have almost surely that
\begin{equation}
X_t(g_x)=\delta_0(g_x)+ M_t(g_x)+\frac{1}{2} \int_0^t \int \frac{1}{|y-x|^2} X_s(dy) ds.
\end{equation}
\end{proposition}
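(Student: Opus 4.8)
The plan is to recognize the claimed identity as the martingale problem (1) applied to the test function $g_x$, the only difficulty being that $g_x(y)=\log|y-x|$ does not belong to $C_b^2(\R^3)$: it is singular at $y=x$ and grows at infinity. The decisive computation is the Laplacian of $g_x$ away from its singularity. Writing $g_x$ as a radial function of $r=|y-x|$ and using $\Delta f = f'' + \tfrac{d-1}{r}f'$ in $d=3$, one finds
\begin{equation*}
\Delta g_x(y) = -\frac{1}{|y-x|^2} + \frac{2}{|y-x|^2} = \frac{1}{|y-x|^2}, \qquad y \neq x,
\end{equation*}
so that $\tfrac{\Delta}{2} g_x(y) = \tfrac{1}{2|y-x|^2}$, which is exactly the drift appearing in (4). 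Thus, at least formally, (4) is (1) with $\phi = g_x$.

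To make this rigorous, I would introduce approximations $g_x^{(n)} \in C_b^2(\R^3)$ obtained by (i) replacing $g_x$ on the ball $B(x,1/n)$ by a smooth bounded radial function matched to $C^2$ order at $r=1/n$, and (ii) multiplying by a smooth spatial cutoff $\chi_n$ equal to $1$ on $B(0,n)$ and $0$ off $B(0,2n)$. On the region where no modification occurs one keeps $\Delta g_x^{(n)} = 1/|y-x|^2$ exactly, while $|\Delta g_x^{(n)}|$ stays bounded (by a constant times $n^2$) on the smoothed ball. Applying (1) to each $g_x^{(n)}$ gives
\begin{equation*}
X_t(g_x^{(n)}) = \delta_0(g_x^{(n)}) + M_t(g_x^{(n)}) + \int_0^t X_s\Big(\tfrac{\Delta}{2} g_x^{(n)}\Big)\, ds.
\end{equation*}

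The remaining work is to pass to the limit $n \to \infty$ term by term. Since super-Brownian motion started from $\delta_0$ has almost surely compact range, for $n$ large (depending on $\omega$) the outer cutoff is irrelevant, giving $\delta_0(g_x^{(n)}) \to \log|x|$ and $X_t(g_x^{(n)}) \to X_t(g_x)$, the latter also requiring $X_t(|g_x|) < \infty$, which holds because $\log|y-x|$ is locally integrable against $X_t$ near $x\neq 0$. For the drift, the crucial input is the almost sure finiteness of $\int_0^t \int |y-x|^{-2} X_s(dy)\, ds$, which equals $[M(\phi_x)]_t / c_3^2$ from (3); by dominated convergence the genuine part of the drift converges to $\tfrac12 \int_0^t\int |y-x|^{-2} X_s(dy)\, ds$, and the contribution from the smoothed ball, bounded by $C n^2 \int_0^t X_s(B(x,1/n))\, ds = Cn^2 \int_{B(x,1/n)} L_t^y\, dy = O(n^{-1})$ using the continuity of the local time near $x$, vanishes.

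Finally $M_t(g_x^{(n)})$ converges in $L^2$ to a continuous martingale $M_t(g_x)$, which one checks via
\begin{equation*}
E\,\big[M(g_x^{(n)}) - M(g_x^{(m)})\big]_t = E \int_0^t X_s\big((g_x^{(n)} - g_x^{(m)})^2\big)\, ds \to 0,
\end{equation*}
estimated by the first-moment formula $E\,X_s(\phi) = \int p_s(y)\phi(y)\,dy$ together with the local integrability of $(\log|y-x|)^2$. The main obstacle throughout is the singularity at $y=x$: every passage to the limit hinges on the integrability of $|y-x|^{-2}$ (and of $\log^2|y-x|$) against the occupation measure, and on controlling the smoothed-region error, for which the finiteness in (3) and the continuity of the local time near $x$ are the essential tools.
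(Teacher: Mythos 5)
Your argument is correct in outline but takes a genuinely different route from the paper's. You regularize $g_x$ by a direct spatial patch: a $C^2$ modification on $B(x,1/n)$ plus a cutoff at infinity, keeping $\Delta g_x^{(n)}=|y-x|^{-2}$ exactly off the patch, and you kill the two error terms by (a) the compact range of $X$ started at $\delta_0$ for the outer cutoff and (b) the occupation density formula plus the local boundedness of $y\mapsto L_t^y$ near $x\neq 0$ for the patched ball, where $n^2\int_{B(x,1/n)}L_t^y\,dy=O(n^{-1})$. The paper instead regularizes with the heat semigroup: it applies the martingale problem to $P_\epsilon g_{n,x}$, sends $n\to\infty$, and then sends $\epsilon\to 0$ with every term converging in $L^1$ via explicit Brownian moment bounds (Lemma 1, Lemma 4, Corollary 1); the technical heart is Lemma 5, an integration-by-parts identity giving $\Delta P_\epsilon g_x=P_\epsilon(|\cdot-x|^{-2})$ with the surface term $N_\delta\to 0$ --- which is exactly where your pointwise computation $\Delta g_x=|y-x|^{-2}$ and the absence of a delta mass at $x$ (in contrast to $d=2$, cf.\ Lemma 7) are verified. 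Your route is shorter and makes the vanishing of any singular contribution at $x$ transparent (the patched Laplacian has total mass $O(1/n)$ on the ball), but it imports the existence and continuity of the local time near $x$ and the compact-range theorem as external inputs; the paper's route is longer but self-contained, resting only on first and second moment formulas. Two points you should tighten: a.s.\ finiteness of $X_t(|g_x|)$ does not follow from ``local integrability'' alone, since $X_t$ is a.s.\ singular in $d=3$ --- it needs the first-moment bound $E\,X_t(|\cdot-x|^{-1})<\infty$ (the paper's Lemma 4); and you should exhibit a concrete patch realizing $|\Delta g_x^{(n)}|\le Cn^2$ (a polynomial in $|y-x|^2$ matched to second order at $|y-x|=1/n$ does the job).
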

\begin{proposition}
For $d>1$, we have
\begin{equation*}
\int_0^t \int \frac{1}{|y-x|} p_s(y) dy ds \leq \frac{2}{d-1} E|B_t|,  \ \ \  \forall x.
\end{equation*}
\end{proposition}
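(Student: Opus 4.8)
The plan is to recognize the inner space integral as a Gaussian expectation, reduce the supremum over $x$ to the value at $x=0$ by a rearrangement argument, and then compute that value exactly to recover the constant $\tfrac{2}{d-1}$. Throughout, $B_s$ denotes a $d$-dimensional Brownian motion started at $0$.

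First I would write, for each fixed $s>0$,
\begin{equation*}
\int \frac{1}{|y-x|}\,p_s(y)\,dy = E\Big[\frac{1}{|B_s-x|}\Big],
\end{equation*}
and use Tonelli to put the double integral in the form $\int_0^t E[1/|B_s-x|]\,ds$, so that it suffices to bound the integrand for each $s$. The key step is the claim that, for each fixed $s>0$, the centre is extremal, i.e. $E[1/|B_s-x|]\le E[1/|B_s|]$ for all $x$. To see this, write $\tfrac{1}{|y-x|}=k(y-x)$ with $k(z)=1/|z|$; the function $y\mapsto k(y-x)$ is a translate of the symmetric decreasing function $k$, so its symmetric decreasing rearrangement is again $k$, while $p_s$ is already symmetric decreasing, so the Hardy--Littlewood rearrangement inequality yields $\int k(y-x)\,p_s(y)\,dy\le\int k(y)\,p_s(y)\,dy$, which is exactly the claim. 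I expect this maximality at the centre to be the only genuine obstacle, the rest being explicit computation. (In the case $d=3$ of primary interest one may instead note that $1/|y|$ is a multiple of the Newtonian kernel, so $x\mapsto 1/|B_s-x|$ is superharmonic and its radially symmetric expectation is nonincreasing in $|x|$; the rearrangement statement is what covers general $d$.)

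Granting the claim, it remains to evaluate the bound at $x=0$. Brownian scaling $B_s\stackrel{d}{=}\sqrt{s}\,B_1$ gives $E[1/|B_s|]=s^{-1/2}E[1/|B_1|]$, hence
\begin{equation*}
\int_0^t E\Big[\frac{1}{|B_s|}\Big]\,ds = E\Big[\frac{1}{|B_1|}\Big]\int_0^t s^{-1/2}\,ds = 2\sqrt{t}\,E\Big[\frac{1}{|B_1|}\Big].
\end{equation*}
Since $|B_1|^2$ is $\chi^2_d$ distributed, a Gamma-function computation gives the exact identity $E[1/|B_1|]=\tfrac{1}{d-1}E|B_1|$, valid precisely when $d>1$, together with $E|B_t|=\sqrt{t}\,E|B_1|$. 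Combining these, the left-hand side at $x=0$ equals $\tfrac{2}{d-1}E|B_t|$, and by the maximality of the centre this bounds the left-hand side for every $x$, with equality at $x=0$. This exact matching of the constant at $x=0$ is, incidentally, a useful check that the rearrangement reduction is the intended route rather than a cruder estimate.
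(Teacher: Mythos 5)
Your proof is correct, and it takes a genuinely different route from the paper's. The paper regularizes the distance function, applying It\^o's formula to $h_{\eps,x}(B_t)=\sqrt{|B_t-x|^2+\eps}$, taking expectations (the stochastic integral is a true martingale since its integrand is bounded), and then sending $\eps\to 0$ via Fatou's lemma; the constant $\frac{2}{d-1}$ emerges from $\Delta_y|y-x|=(d-1)/|y-x|$, and the dependence on $x$ is removed by the triangle inequality $E|B_t-x|-|x|\le E|B_t|$. You instead split the problem into two cleanly separated steps: a reduction to $x=0$ via the Hardy--Littlewood rearrangement inequality (using that $1/|\cdot-x|$ rearranges to $1/|\cdot|$ and that $p_s$ is already symmetric decreasing), followed by an exact evaluation at $x=0$ using Brownian scaling and the $\chi^2_d$ moment identity $E[1/|B_1|]=\frac{1}{d-1}E|B_1|$, whose verification via $\Gamma\big(\frac{d+1}{2}\big)=\frac{d-1}{2}\Gamma\big(\frac{d-1}{2}\big)$ is correct. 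Your argument has the merit of exhibiting the constant as sharp (equality at $x=0$), at the cost of invoking the rearrangement inequality as an external input; the paper's argument is more elementary and purely probabilistic, and in fact its It\^o identity also yields the same equality at $x=0$ implicitly, though the paper does not record this. Both proofs are valid for all $d>1$ and all $x$.
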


\subsection{Proof of Theorem 1}
Before proceeding to the proof, we state some lemmas which will be used in proving Theorem 1. 
\begin{lemma}
For any  $u,v \in \R^d-{\{0\}}$,  we have
\begin{equation*}
\Big|\log \frac{|u+v|}{|v|}\Big|  \leq \sqrt[]{\frac{|u|}{|v|}}+\sqrt[]{\frac{|u|}{|u+v|}}.
\end{equation*}
\end{lemma}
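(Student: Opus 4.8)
The plan is to prove the elementary inequality
\[
\Bigl|\log\tfrac{|u+v|}{|v|}\Bigr|\le\sqrt{\tfrac{|u|}{|v|}}+\sqrt{\tfrac{|u|}{|u+v|}}
\]
by reducing to the scalar estimate for the logarithm and then controlling the two vector norms $|u+v|$ and $|v|$ against the perturbation size $|u|$. First I would invoke the one-variable bound $|\log(b/a)|\le |b-a|/\sqrt{ab}$ for positive reals $a,b$, which follows from $|\log t|\le (t-1)/\sqrt{t}=\sqrt{t}-1/\sqrt{t}$ for $t>0$ (an easy consequence of the mean value theorem or of concavity/convexity of the relevant functions). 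Applying this with $a=|v|$ and $b=|u+v|$ gives
\[
\Bigl|\log\tfrac{|u+v|}{|v|}\Bigr|\le\frac{\bigl||u+v|-|v|\bigr|}{\sqrt{|u+v|\,|v|}}.
\]

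Next I would use the reverse triangle inequality $\bigl||u+v|-|v|\bigr|\le|u|$ to replace the numerator, obtaining
\[
\Bigl|\log\tfrac{|u+v|}{|v|}\Bigr|\le\frac{|u|}{\sqrt{|u+v|\,|v|}}.
\]
It remains to show $|u|/\sqrt{|u+v|\,|v|}\le\sqrt{|u|/|v|}+\sqrt{|u|/|u+v|}$. After factoring out $\sqrt{|u|}$ and writing $p=\sqrt{|v|}$, $q=\sqrt{|u+v|}$, this is the inequality $\sqrt{|u|}/(pq)\le 1/p+1/q$, i.e.\ $\sqrt{|u|}\le p+q$. But $p+q=\sqrt{|v|}+\sqrt{|u+v|}$, so it suffices to check $\sqrt{|u|}\le\sqrt{|v|}+\sqrt{|u+v|}$, which again follows from the (sub)additivity $|u|\le|v|+|u+v|$ together with the concavity bound $\sqrt{s+t}\le\sqrt{s}+\sqrt{t}$.

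I expect the only genuine content to be the first step, the scalar logarithm estimate $|\log t|\le\sqrt{t}-1/\sqrt{t}$; everything afterward is triangle-inequality bookkeeping. A mild subtlety is that the right-hand side of the claimed inequality is symmetric in the roles played by $|v|$ and $|u+v|$ through the two square-root terms, which is exactly what makes the bound robust regardless of whether $|u+v|\ge|v|$ or $|u+v|<|v|$; I would make sure the scalar bound is applied in the symmetric form $|b-a|/\sqrt{ab}$ so that no case distinction on the sign of $\log(|u+v|/|v|)$ is needed. The hardest part is simply verifying the scalar inequality cleanly, and if the direct route proves awkward I would fall back on splitting into the cases $t\ge1$ and $t\le1$ and using $\log t\le t-1$ on each side.
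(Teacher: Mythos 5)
Your argument is correct, and it takes a genuinely different route from the paper. The paper proves the one-sided scalar bound $\log(1+u)\le\sqrt{u}$ (via $f(u)=\sqrt{u}-\log(1+u)$, $f'(u)=(\sqrt{u}-1)^2/(2\sqrt{u}(1+u))\ge 0$) and then splits into the cases $|u+v|\ge|v|$ and $|u+v|\le|v|$, using the triangle inequality to bound the ratio by $1+|u|/|v|$ or $1+|u|/|u+v|$ and discarding one of the two square-root terms in each case. You instead use the symmetric scalar bound $|\log(b/a)|\le|b-a|/\sqrt{ab}$, the reverse triangle inequality $\bigl||u+v|-|v|\bigr|\le|u|$, and finally $\sqrt{|u|}\le\sqrt{|v|}+\sqrt{|u+v|}$; this avoids the case split entirely and actually uses both terms on the right-hand side, which is arguably cleaner and explains why the bound is symmetric in $|v|$ and $|u+v|$. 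The cost is that your scalar lemma is slightly less standard; note two small points. First, as literally written, $|\log t|\le(t-1)/\sqrt{t}$ fails for $t<1$ (the right side is negative); you need $|t-1|/\sqrt{t}$ throughout, which you implicitly acknowledge by insisting on the form $|b-a|/\sqrt{ab}$. Second, your proposed fallback of ``using $\log t\le t-1$'' does not deliver the scalar lemma: applying it to $\sqrt{t}$ gives $\log t\le 2(\sqrt{t}-1)$, which exceeds $(t-1)/\sqrt{t}$ for $t>1$. The clean proof is the same derivative computation the paper uses for its own scalar bound: with $g(t)=(t-1)/\sqrt{t}-\log t$ one finds $g(1)=0$ and $g'(t)=(\sqrt{t}-1)^2/(2t^{3/2})\ge0$ for $t\ge1$, and the case $t\le1$ follows from the invariance of both sides under $t\mapsto1/t$. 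With that substitution your proof is complete and self-contained.
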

\begin{proof}
Let $f(u)=\sqrt[]{u}-\log(1+u)$ for $u\geq 0$. Observe that $f(0)=0$ and \[f'(u)=\frac{1}{2\sqrt[]{u}}-\frac{1}{1+u}=\frac{(\sqrt[]{u}-1)^2}{2\sqrt[]{u} (1+u)} \geq 0,\] therefore $f(u)\geq 0$ and $\log(1+u) \leq \sqrt[]{u}$ for all $u\geq 0$.\\
If $|u+v|\geq |v|$, then \[\Big|\log \frac{|u+v|}{|v|}\Big|=\log \frac{|u+v|}{|v|} \leq \log \frac{|u|+|v|}{|v|}\leq \sqrt[]{\frac{|u|}{|v|}}\leq \sqrt[]{\frac{|u|}{|v|}}+\sqrt[]{\frac{|u|}{|u+v|}}.\]
If $|u+v|\leq |v|$, then \[\Big|\log \frac{|u+v|}{|v|}\Big|=\log \frac{|v|}{|u+v|} \leq \log \frac{|v+u|+|u|}{|u+v|}\leq \sqrt[]{\frac{|u|}{|u+v|}}\leq \sqrt[]{\frac{|u|}{|v|}}+\sqrt[]{\frac{|u|}{|u+v|}}.\]
So Lemma 1 follows.

\end{proof}
\begin{lemma}
For any $t>0$, we have
\[\limsup_{x\to 0} E\Big[ \Big(\int \frac{1}{|y-x|} X_t(dy)\Big)^2  \Big]<\infty.\]

\end{lemma}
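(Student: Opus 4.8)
The plan is to bound the second moment directly via the Konno--Shiga formula. Writing $f_x(y):=1/|y-x|$, so that the quantity in the lemma is $X_t(f_x)=\int \tfrac{1}{|y-x|}X_t(dy)$, and taking $X_0=\delta_0$, the second moment formula gives
\begin{equation*}
E\bigl[X_t(f_x)^2\bigr]=\bigl(P_tf_x(0)\bigr)^2+\int_0^t P_s\bigl((P_{t-s}f_x)^2\bigr)(0)\,ds.
\end{equation*}
Since $f_x$ is unbounded at $y=x$, I would first apply this to the truncations $f_x\wedge n$, obtain bounds uniform in $n$, and let $n\to\infty$ by monotone convergence; it then suffices to bound the right-hand side uniformly for $x$ near $0$.

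The key tool is the elementary potential identity in $d=3$, namely $\int_0^\infty p_a(z)\,da=c_3/|z|$, which rewrites $f_x(y)=c_3^{-1}\int_0^\infty p_a(y-x)\,da$ and hence, after interchanging the order of integration and using the convolution identity $p_r*p_a=p_{r+a}$,
\begin{equation*}
P_rf_x(u)=c_3^{-1}\int_r^\infty p_b(u-x)\,db.
\end{equation*}
Bounding $p_b(u-x)\le(2\pi b)^{-3/2}$ and integrating yields the uniform estimate $P_rf_x(u)\le C\,r^{-1/2}$ with $C$ independent of $x$ and $u$. In particular the first term above satisfies $P_tf_x(0)\le C\,t^{-1/2}$, uniformly in $x$.

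For the second term I would spend one factor of $P_{t-s}f_x$ on the uniform bound $P_{t-s}f_x\le C(t-s)^{-1/2}$ and keep the other, so that
\begin{equation*}
P_s\bigl((P_{t-s}f_x)^2\bigr)(0)\le C(t-s)^{-1/2}\,P_s\bigl(P_{t-s}f_x\bigr)(0)=C(t-s)^{-1/2}\,P_tf_x(0),
\end{equation*}
where the final equality is the semigroup property $P_sP_{t-s}=P_t$. Integrating in $s$, using $\int_0^t(t-s)^{-1/2}\,ds=2\sqrt t$ together with $P_tf_x(0)\le Ct^{-1/2}$, bounds the second term by $2C^2$, again uniformly in $x$. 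Combining the two estimates gives $E[X_t(f_x)^2]\le C^2t^{-1}+2C^2$ for all small $x$, which yields the claimed finite $\limsup$.

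The main obstacle is the squared singularity $|y-x|^{-2}$ hidden inside $(P_{t-s}f_x)^2$, which sits at the borderline of integrability in $d=3$, so any crude pointwise bound on the square diverges. The decisive idea is to split the square into one $(t-s)^{-1/2}$ factor times a factor whose spatial average against the heat kernel collapses, through the semigroup identity, back to the already-controlled first-moment quantity $P_tf_x(0)$; the resulting $(t-s)^{-1/2}$ time singularity is then integrable. The only other point requiring care is the justification of the moment formula for the unbounded $f_x$, which is handled by the truncation argument noted above.
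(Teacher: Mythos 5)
Your proof is correct, and although it starts from the same Konno--Shiga second-moment formula as the paper, the way you control the dangerous term $\int_0^t P_s\bigl((P_{t-s}f_x)^2\bigr)(0)\,ds$ is genuinely different. The paper applies Cauchy--Schwarz to push the square inside, $(P_{t-s}f_x)^2\le P_{t-s}(f_x^2)$, then collapses $P_sP_{t-s}=P_t$ by Chapman--Kolmogorov and is left with $t\int p_t(z)\,|z-x|^{-2}\,dz$, which it bounds by splitting at $|z-x|=1$ and using that $|z|^{-2}$ is locally integrable in $\R^3$ (the same trick it uses for the first term). You instead establish the uniform pointwise bound $P_rf_x\le C\,r^{-1/2}$ via the Newtonian-potential identity $\int_0^\infty p_a(z)\,da=c_3/|z|$, spend one factor of the square on this sup bound, and let the semigroup property return the other factor to the already-controlled $P_tf_x(0)$; the resulting $(t-s)^{-1/2}$ time singularity is integrable. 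Your route is slightly sharper: it yields a bound of the form $C t^{-1}+C$ that is uniform over all $x\ne 0$ and fully quantitative in $t$, and the potential-theoretic sup estimate is the more transferable tool. The paper's version is marginally more elementary, needing nothing beyond the heat-kernel sup $(2\pi t)^{-3/2}$ and local integrability of $|z|^{-2}$ in three dimensions. Your remark that the moment formula must first be justified for the unbounded $f_x$ by truncation and monotone convergence is a point of rigor the paper passes over silently, and it is easily completed exactly as you indicate.
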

\begin{proof}
\begin{eqnarray*}
 E\Big[ \Big(\int \frac{1}{|y-x|} X_t(dy)\Big)^2  \Big] &=& \Big[\int  p_t(y) \frac{1}{|y-x|} dy \Big]^2\\
&+& \int_0^t \  ds  \int p_s(y) \ dy \Big( \int p_{t-s}(y-z) \frac{1}{|z-x|} \ dz \Big)^2 .
\end{eqnarray*}
For the first term,
\begin{eqnarray*}
& &\int  p_t(y) \frac{1}{|y-x|} dy\\
&\leq & 1+\int_{|y-x|<1} (\frac{1}{\sqrt[]{2\pi t}})^3 e^{\frac{-|y|^2}{2t}} \frac{1}{|y-x|}  dy\\
&\leq & 1+ (\frac{1}{\sqrt[]{2\pi t}})^3  \int_{\R^3}  \frac{1}{|y-x|} 1_{\{|y-x|<1\}} dy\\
&=& 1+ (\frac{1}{\sqrt[]{2\pi t}})^3 \  4\pi \int_0^1 r^2 \ dr  \ \frac{1}{r} <\infty. 
\end{eqnarray*}
For the second term, we use Cauchy Schwarz to get
\begin{eqnarray*}
& &(\int p_{t-s} (y-z) \frac{1}{|z-x|} dz)^2\\
&\leq & \int p_{t-s}(y-z) dz \cdot \int p_{t-s}(y-z) \frac{1}{|z-x|^2} dz\\
&=& \int p_{t-s}(y-z) \frac{1}{|z-x|^2} dz,
\end{eqnarray*}
and by Chapman-Kolmogorov
\begin{eqnarray*}
& & \int_0^t \  ds  \int p_s(y) \ dy \Big( \int p_{t-s}(y-z) \frac{1}{|z-x|} \ dz \Big)^2\\
&\leq & \int_0^t ds \int p_s(y) dy \int p_{t-s}(y-z) \frac{1}{|z-x|^2} dz\\
&=& \int_0^t ds \int \frac{1}{|z-x|^2} dz \int p_s(y) p_{t-s} (y-z) dy\\
&=& \int_0^t ds \int \frac{1}{|z-x|^2} \ dz \cdot p_t(z) =t \int \frac{1}{|z-x|^2}  p_t(z) dz.
\end{eqnarray*}
Using the same trick in the first term, we get\[\int \frac{1}{|z-x|^2}  p_t(z) dz \leq 1+ (\frac{1}{\sqrt[]{2\pi t}})^3 4\pi<\infty.\] 
Therefore we get \[\limsup_{x\to 0} E\Big[ \Big(\int \frac{1}{|y-x|} X_t(dy)\Big)^2  \Big]<\infty.\]
\end{proof}
\begin{lemma}
For any $t>0$,
\[\text{(i) } \limsup_{x\to 0} E\Big(X_t^2(g_{x})\Big) <\infty\] 
and 
\[ \text{(ii) }  \limsup_{x\to 0} E\Big(M_t^2(g_{x})\Big) <\infty.\]
\end{lemma}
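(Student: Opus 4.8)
The plan is to reduce both (i) and (ii) to uniform bounds (as $x\to 0$) on weighted integrals of $(\log|y-x|)^2$, and then to handle the collision, as $x\to 0$, of the singularity at $y=x$ with the origin. For (i) I would start from the second-moment formula with $X_0=\delta_0$ and $\phi=g_x$,
\[
E(X_t^2(g_x)) = \big(P_t g_x(0)\big)^2 + \int_0^t P_s\big((P_{t-s}g_x)^2\big)(0)\,ds .
\]
The first term equals $\big(\int p_t(y)\log|y-x|\,dy\big)^2$, and splitting $\log|y-x|$ according to whether $|y-x|<1$ or $|y-x|\ge1$ bounds it uniformly for $|x|\le 1$: the (negative, singular) near part is controlled by $\|p_t\|_\infty\int_{|v|<1}|\log|v||\,dv<\infty$, and the far part by $\log m\le m$ together with $\int p_t(y)|y-x|\,dy\le E|B_t|+|x|$. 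For the second term, Cauchy--Schwarz against the probability density $p_{t-s}(z-u)\,du$ gives $(P_{t-s}g_x(z))^2\le\int p_{t-s}(z-u)(\log|u-x|)^2\,du$, after which Chapman--Kolmogorov ($\int p_s(z)p_{t-s}(z-u)\,dz=p_t(u)$) collapses the double integral to $t\int p_t(u)(\log|u-x|)^2\,du$. This last integral is bounded uniformly by the same near/far splitting (near part $\le\|p_t\|_\infty\int_{|v|<1}(\log|v|)^2\,dv$, far part via $(\log m)^2\le m$ and $\int p_t(u)|u-x|\,du\le E|B_t|+|x|$), which proves (i).

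For (ii) I would use that the quadratic variation of the Tanaka martingale in Proposition 1 is $[M(g_x)]_t=\int_0^t X_s(g_x^2)\,ds$, in parallel with (3); establishing that its expectation is finite is exactly what makes $M_t(g_x)$ an $L^2$-martingale, so that $E(M_t^2(g_x))=E[M(g_x)]_t$. By the first-moment formula this gives
\[
E(M_t^2(g_x))=\int_0^t\int p_s(y)(\log|y-x|)^2\,dy\,ds .
\]
The far part ($|y-x|\ge1$) is again bounded using $(\log m)^2\le m$ and $\int_0^t(E|B_s|+|x|)\,ds<\infty$. For the near part I would \emph{not} use $\|p_s\|_\infty$ (which is not integrable in $s$ near $0$); instead, writing it as $\int_{|y-x|<1}(\log|y-x|)^2\big(\int_0^t p_s(y)\,ds\big)\,dy$ and using the $d=3$ potential bound $\int_0^t p_s(y)\,ds\le\int_0^\infty p_s(y)\,ds=c_3/|y|$, the substitution $v=y-x$ reduces everything to
\[
\sup_{0<|x|\le1}\int_{|v|<1}\frac{(\log|v|)^2}{|v+x|}\,dv<\infty .
\]

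The main obstacle is precisely this supremum: it is the analytic reflection of the failure of continuity at the origin, namely the collision of the logarithmic singularity of the numerator at $v=0$ with the $1/|v+x|$ singularity at $v=-x$ as $x\to 0$. I would control it by splitting $\{|v|<1\}$ into $\{2|x|\le|v|<1\}$ and $\{|v|<2|x|\}$ (valid once $|x|<1/2$). On the first region $|v+x|\ge|v|/2$, so the integrand is dominated by $2(\log|v|)^2/|v|$, whose integral $8\pi\int_0^1 r(\log r)^2\,dr$ is finite and independent of $x$. On the small region I would rescale $v=|x|w$; writing $\hat x=x/|x|$ and using $(\log|x|+\log|w|)^2\le 2(\log|x|)^2+2(\log|w|)^2$ together with the rotation-invariant finiteness of $\int_{|w|<2}|w+\hat x|^{-1}\,dw$ and $\int_{|w|<2}(\log|w|)^2|w+\hat x|^{-1}\,dw$ (the singularity at $w=-\hat x$ is integrable in $d=3$ and $(\log|w|)^2$ is bounded there), this region contributes at most $C|x|^2\big((\log|x|)^2+1\big)\to 0$. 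Combining the two regions yields the uniform bound and completes (ii).
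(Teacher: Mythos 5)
Your argument is correct. Part (i) is essentially the paper's proof: the same second-moment formula, Cauchy--Schwarz against $p_{t-s}(z-u)\,du$, Chapman--Kolmogorov, and a near/far split of the logarithm (you bound the near part by $\|p_t\|_\infty\int_{|v|<1}(\log|v|)^2dv$, whereas the paper bounds $|\log|y-x||\le 1/|y-x|$ on $\{|y-x|<1\}$ and reuses its Lemma 2; both work since $t>0$ is fixed). The real divergence is in (ii). Both proofs reduce to showing $\int_0^t\int p_s(y)(\log|y-x|)^2\,dy\,ds$ is bounded uniformly in $x$, but the paper then applies $\log u\le\sqrt{u}$ (its Lemma 1) to dominate $(\log|y-x|)^2$ by $|y-x|^{-1}$ near the singularity and invokes its Proposition 2, a stochastic-calculus bound $\int_0^t\int p_s(y)|y-x|^{-1}\,dy\,ds\le\frac{2}{d-1}E|B_t|$ obtained from It\^o's formula applied to $\sqrt{|B_s-x|^2+\epsilon}$, which is uniform in $x$ and works in every dimension $d>1$ (it is reused for $d=2$ in Theorem 2). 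You instead keep the $(\log)^2$ weight, integrate out time via the explicit Newtonian potential $\int_0^\infty p_s(y)\,ds=c_3/|y|$, and control $\sup_{0<|x|\le1}\int_{|v|<1}(\log|v|)^2|v+x|^{-1}\,dv$ by the two-scale decomposition $\{|v|\ge2|x|\}\cup\{|v|<2|x|\}$ with the rescaling $v=|x|w$; the estimates there are all correct (the two singularities at $w=0$ and $w=-\hat x$ are separated and individually integrable in $d=3$). Your route is more elementary and self-contained --- it needs no It\^o-formula input and makes the ``colliding singularities'' issue explicit --- but it leans on the closed form of the three-dimensional potential kernel, while the paper's route recycles a proposition that transfers verbatim to $d=2$. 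The one step you should make explicit is the identity $E(M_t^2(g_x))=E[M(g_x)]_t$: as you note, this follows once the quadratic variation has finite expectation, because $M(g_x)$ is constructed (in Proposition 1) as an $L^2$-limit of true martingales; the paper uses this silently as well.
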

\begin{proof}
{\ }
(i) For $|y-x|<1$, we bound $|g_{x}(y)|=\log 1/|y-x|$ by $1/|y-x|$, so
\begin{eqnarray*}
 & & \limsup_{x\to 0} E \Big[ \Big(\int_{|y-x|<1} \log |y-x| X_t(dy)\Big)^2\Big] \\
 & & \leq \limsup_{x\to 0} E\Big[\Big(\int \frac{1}{|y-x|} X_t(dy) \Big)^2\Big]<\infty
\end{eqnarray*}
 according to Lemma 2.\\
 
For $|y-x|\geq 1$, we bound $|g_{x}(y)|=\log |y-x|$ by $|y-x|$, so 
\begin{eqnarray*}
& & E \Big[ \Big(\int_{|y-x|\geq 1} \log |y-x| X_t(dy)\Big)^2\Big] \leq E\Big[\Big(\int |y-x| \  X_t(dy) \Big)^2\Big]\\
&=& \Big(\int p_t(y) |y-x| \ dy\Big)^2+\int_0^t ds \int p_s(z) dz \Big(\int |y-x| \  p_{t-s} (z-y) dy\Big)^2.
\end{eqnarray*}
It is clear that the first term is finite for any $x$ and for the second term,
\begin{eqnarray*}
& & \int_0^t ds \int p_s(z) dz \Big(\int p_{t-s} (z-y) |y-x| dy\Big)^2\\
&\leq &\int_0^t ds \int p_s(z) dz \int p_{t-s} (z-y) |y-x|^2 dy\\
&=& \int_0^t ds \int p_{t} (y) |y-x|^2 dy<\infty.
\end{eqnarray*}
So
\begin{eqnarray*}
& & \limsup_{x\to 0} E\Big[\Big(X_t(g_{x})\Big)^2\Big]\\
&=&\limsup_{x\to 0} E\Big[\Big(\int_{|y-x|<1} \log |y-x| \ X_t(dy) +\int_{|y-x|\geq 1} \log |y-x| \ X_t(dy)\Big)^2\Big]\\
&\leq &2 \  \limsup_{x\to 0}  E\Big[\Big(\int_{|y-x|<1} \log |y-x| \ X_t(dy)\Big)^2\Big] \\
 &   \ \ \         & +2 \  \limsup_{x\to 0}  E\Big[\Big(\int_{|y-x|\geq 1} \log |y-x| \ X_t(dy)\Big)^2\Big]<\infty.
\end{eqnarray*}
\\
(ii) Since $M_t(g_x)$ is a martingale with quadratic variation $[M(g_x)]_t=\int_0^t X_s(g_x^2) ds$, we get 
\begin{eqnarray*}
& & E\Big(M_t^2(g_x)\Big)=E \int_0^t X_s(g_x^2) \ ds= \int_0^t ds \int p_s(y) \Big(\log |y-x|\Big)^2 dy\\
&\leq & \int_0^t ds \int p_s(y) \frac{1}{|y-x|} 1_{\{|y-x|<1\}}dy+\int_0^t ds \int p_s(y) |y-x| 1_{\{|y-x|\geq 1\}}dy\\
&\leq & \int_0^t ds \int p_s(y) \frac{1}{|y-x|} dy+\int_0^t ds \int p_s(y) |y-x| dy. \ \ \ \ \ \  
(\star)
\end{eqnarray*}
We use the fact that $\log u\leq \log(1+u) \leq \sqrt{u}$ for $u \geq 1$ by Lemma 1.\\
By Proposition 2 in $d=3$, we get \[ \int_0^t ds \int p_s(y) \frac{1}{|y-x|} dy \leq E |B_t|<\infty.\]
As it is obvious that the latter term in $(\star)$ above is finite, we get \[ \limsup_{x\to 0} E\Big(M_t^2(g_{x})\Big) <\infty.\]
\end{proof}

\subsubsection{Convergence in distribution}
{\ }
Observe that combining (3) and (4), we obtain
\begin{equation*}
[M(\phi_x)]_t=2   c_3^2 \Big(X_t(g_x)-\delta_0(g_x)- M_t(g_x)\Big).
\end{equation*}
Note that $\delta_0(g_x)= \log |x|=-\log 1/|x|$, so
\begin{equation}
E\Big[\Big([M(\phi_x)]_t-c_{3.1} \log \frac{1}{|x|}\Big)^2\Big]= c_{3.1}^2 E\Big[\Big(X_t(g_x)-M_t(g_x)\Big)^2\Big]
\end{equation}
where $c_{3.1}=2  c_3^2$.
\begin{eqnarray*}
 & & E\Big[\Big(\frac{[M(\phi_x)]_t-c_{3.1} \log \frac{1}{|x|}}{c_{3.1} \log \frac{1}{|x|}}\Big)^2\Big]= \frac{c_{3.1}^2}{(c_{3.1} \log \frac{1}{|x|})^2} E\Big[\Big(X_t(g_x)-M_t(g_x)\Big)^2 \Big]\\
 &\leq &\frac{2}{(\log \frac{1}{|x|})^2} \Big[E\Big(X_t^2(g_x)\Big)+E\Big(M_t^2(g_x)\Big)\Big] \to  0 \text{ as } x \to 0,
\end{eqnarray*}
by Lemma 3. Hence we have shown that 
\begin{equation}
\frac{[M(\phi_x)]_t}{c_{3.1} \log \frac{1}{|x|}} \xrightarrow[]{L^2} 1 \text{ as } x\to 0.
\end{equation}
Since $\frac{[M(\phi_x)]_t}{c_{3.1} \log \frac{1}{|x|}}$ is the quadratic variation of martingale $\frac{M_t(\phi_x)}{\sqrt[]{c_{3.1} \log \frac{1}{|x|}}}$, using the Dubins-Schwarz theorem (see [6], Theorem V.1.6), we can find some  Brownian motion $B^{x} (t)$ in dimension 1 depending on $x$ such that
\[
\frac{M_t(\phi_x)}{\sqrt[]{c_{3.1} \log \frac{1}{|x|}}}=B^{x} \Big(\frac{[M(\phi_x)]_t}{c_{3.1} \log \frac{1}{|x|}}\Big).
\]
For any sequence ${\{x_n\}}$ that goes to 0, (6) implies that
\[\tau_n:=\frac{[M(\phi_{x_n})]_t}{c_{3.1} \log \frac{1}{|x_n|}} \xrightarrow[]{\text{ P}} 1 \text{ as } n \to \infty,\] and we claim that \[B_{\tau_n}^{x_n}=B^{x_n} \Big(\frac{[M(\phi_{x_n})]_t}{c_{3.1} \log \frac{1}{|x_n|}}\Big) \xrightarrow[]{d} Z,\] where $Z\sim N(0,1)$ in dimension 1.\\

In fact for any bounded uniformly continuous function $h(x)$, $\forall \ \epsilon>0, \exists \  \delta>0$ such that $|h(x)-h(y)|<\epsilon$ holds for any $x,y \in \R$ with $|x-y|<\delta$.  So
\begin{equation*}
E|h(B_{\tau_n}^{x_n})-h(B_1^{x_n})| \leq \epsilon+ 2 \|h\|_{\infty} \cdot P(|B_{\tau_n}^{x_n}-B_1^{x_n}|>\delta),\\
\end{equation*}
and for any $\gamma>0$, we have
\begin{eqnarray*}
 & & P(|B_{\tau_n}^{x_n}-B_1^{x_n}|>\delta) \\
 &\leq& P(|B_{\tau_n}^{x_n}-B_1^{x_n}|>\delta,|\tau_n-1|<\gamma)+P(|\tau_n-1|>\gamma)\\
 &\leq& P(\sup_{|s-1| \leq \gamma} |B_s^{x_n}-B_1^{x_n}|> \delta)+P(|\tau_n-1|>\gamma)\\
  &=& P(\sup_{|s-1| \leq \gamma} |B_s-B_1|> \delta)+P(|\tau_n-1|>\gamma)\\
  &<& \epsilon+P(|\tau_n-1|>\gamma), \text{ if we pick } \gamma \text{ small enough.}
\end{eqnarray*}
Since $\tau_n$ converge in probability to 1, for $n$ large enough,  we have $P(|\tau_n-1|>\gamma)<\epsilon$ and
so \[E|h(B_{\tau_n}^{x_n})-h(B_1^{x_n})|\leq \epsilon+ 2 \|h\|_{\infty} 2 \epsilon\] and hence
\begin{equation}
\frac{M_t(\phi_{x_n})}{\sqrt[]{c_{3.1} \log \frac{1}{|x_n|}}}=B_{\tau_n}^{x_n} \xrightarrow[]{d} Z,
\end{equation} where $Z\sim N(0,1)$. Recall that $\phi_{x_n}(y)=c_3 /|y-x_n|$ and by Lemma 2 \[\lim_{n\to \infty} E\Big[\Big(\frac{X_t(\phi_{x_n})}{c_{3.1} \log \frac{1}{|x_n|}}\Big)^2\Big]=0,\] hence
\begin{equation}
\frac{X_t(\phi_{x_n})}{\sqrt[]{c_{3.1} \log \frac{1}{|x_n|}}} \xrightarrow[]{\text{ p} } 0.
\end{equation}
Combining (7) and (8), by Theorem 25.4 in Billingsley [2] , we have 
\[\frac{L_t^{x_n}-\frac{1}{|x_n|}}{\sqrt[]{c_{3.1} \log \frac{1}{|x_n|}}}=
\frac{M_t(\phi_{x_n})}{\sqrt[]{c_{3.1} \log \frac{1}{|x_n|}}}- \frac{X_t(\phi_{x_n})}{\sqrt[]{c_{3.1} \log \frac{1}{|x_n|}}} \xrightarrow[]{d} Z.\]
So any sequence that approaches $0$ converges in distribution to $Z$ as above, which implies that \[\frac{L_t^{x}-\frac{1}{|x|}}{\sqrt[]{c_{3.1} \log \frac{1}{|x|}}} \xrightarrow[]{d} Z \text{ as } x\to 0.\]
For $t=\infty$, let $\rho$ be the life time of super Brownian motion $X$, then $L_{\infty}^x=L_\rho^x$. Chp II.5 in Perkins [5] tells us that $\rho<\infty$ a.s.. Sugitani [7] gives us \[L_t^x-L_\epsilon^x \text{ is continuous in } x \text{ for any } 0<\epsilon<t, \] with the initial condition being  $\delta_0$. \\

Fix $\epsilon$ small, we define $L_\rho^x-L_\epsilon^x=0$ if $\rho<\epsilon$. As $x\to 0$, we get 
 \[\frac{L_\rho^{x}-L_\epsilon^{x}}{(c_{3.1} \log \frac{1}{|x|})^{1/2}} \to 0\text{ a.s.},\] and by Theorem 25.4 in Billingsley [2] again we get\[\frac{L_\rho^{x}-\frac{c_3}{|x|}}{(c_{3.1} \log \frac{1}{|x|})^{1/2}} =\frac{L_\rho^{x}-L_\epsilon^{x}}{(c_{3.1} \log \frac{1}{|x|})^{1/2}} +\frac{L_\epsilon^{x}-\frac{c_3}{|x|}}{(c_{3.1} \log \frac{1}{|x|})^{1/2}} \xrightarrow[]{d} Z.\]
$\hfill\square$

\subsubsection{Remaining Part of Theorem 1}
%Note that the quadratic variation of $M_t(\phi)$ tells us that $M_t(\phi) \in \cF_t^X$, Therefore for any $t>0$, $Z \in \cF_t^X$ as it is the distribution limit of $M_t(\phi_{x_n})/(c_{3.1}\log \frac{1}{|x_n|})^{1/2}$, i.e. \[Z \in \bigcap_{t>0} \cF_t^X=\cF_{0+}^X.\]

(i) Fix $0<t\leq \infty$, let $Z_t^{x_n}$  denotes $(L_t^{x_n}-c_3/|x_n|)/(c_{3.1}\log 1/|x_n|)^{1/2}$. By tightness of each component in $(X,Z^{x_n}_t)$, we clearly have tightness of $(X,Z^{x_n}_t)$ as $x_n\to 0$, so it suffices to show all weak limit points coincide. Assume $(X,Z^{x_n}_t)$ converges weakly to $(X,Z)$ for some sequence $x_n\to 0$. Let $(X,Z)$ be defined on $(\tilde{\Omega},\tilde{\cF}_t,\tilde{P})$ where $X$ is super-Brownian motion and $Z$ is standard normal under $\tilde{P}$. \\

For any $0<t_1<t_2<\cdots<t_m$, let $\phi_0: \R\to \R$ and $\psi_i: M_F \to \R$, $1\leq i\leq m$ be bounded continuous, we have \[\lim_{n\to\infty} E\Big[\psi_1(X_{t_1})\cdots \psi_m(X_{t_m}) \phi_0(Z^{x_n}_t)\Big]=\tilde{E}\Big[\psi_1(X_{t_1})\cdots \psi_m(X_{t_m}) \phi_0(Z)\Big]\] since we assume that $(X,Z^{x_n}_t)$ converge weakly to $(X,Z)$.\\
% For independence we need \[E \Big[\psi_1(X_{t_1})\cdots \psi_m(X_{t_m})  \cdot \phi_0(Z)\Big]=E \phi_0(Z) \cdot E \Big[\psi_1(X_{t_1})\cdots \psi_m(X_{t_m}) \Big].\] 
 Pick $\epsilon>0$ such that $\epsilon<t_1$ and $\epsilon<t$, by Sugitani [7], \[L_t^x-L_\epsilon^x \text{ is continuous in } x \text{ for any } 0<\epsilon<t \] with the initial condition being  $\delta_0$,  when $n\to \infty$ we get 
 \[Z_t^{x_n}-Z_\epsilon^{x_n}=\frac{L_t^{x_n}-L_\epsilon^{x_n}}{(c_{3.1} \log \frac{1}{|x_n|})^{1/2}} \to 0\text{ a.s.}. \] and hence \[(0,Z_t^{x_n}-Z_\epsilon^{x_n}) \to (0,0) \text{ a.s.. }\]  
By Theorem 25.4 in Billingsley [2] again  \[(X,Z_\epsilon^{x_n})=(X,Z_t^{x_n})-(0,Z_t^{x_n}-Z_\epsilon^{x_n}) \text{ converge weakly to } (X,Z). \]
 Therefore  since $Z_\epsilon^{x_n} \in \cF_\epsilon^{X}$,
\begin{eqnarray*}
 I &=& \tilde{E} \Big[\psi_1(X_{t_1})\cdots \psi_m(X_{t_m})  \cdot \phi_0(Z)\Big]\\
 &=& \lim_{n\to \infty} E \Big[\psi_1(X_{t_1})\cdots \psi_m(X_{t_m})  \cdot \phi_0(Z_\epsilon^{x_n})\Big]\\
  &=& \lim_{n\to \infty} E \Big[E \Big( \psi_1(X_{t_1})\cdots \psi_m(X_{t_m}) \big| \cF_\epsilon^X \Big) \cdot \phi_0(Z_\epsilon^{x_n})\Big]\\
  &=&\lim_{n\to \infty} E \Big[E_{X_\epsilon} \Big( \prod_{i=1}^m \psi_i(X_{t_i-\epsilon})\Big) \cdot \phi_0(Z_\epsilon^{x_n})\Big]
\end{eqnarray*}
Define \[F_\epsilon(\mu)=E_\mu\Big(\prod_{i=1}^m \psi_i(X_{t_i-\epsilon})\Big)\] for $\mu\in M_F$ and we prove by induction that $F_\epsilon \in C_b(M_F)$. For $m=1$ we have \[F_\epsilon(\mu)=E_\mu \Big( \psi_1(X_{t_1-\epsilon})\Big)=P_{t_1-\epsilon} \psi_1(\mu). \]
By Theorem II.5.1 in Perkins [5], if $P_t F(\mu) = E_\mu F(X_t)$, then $P_t: C_b(M_F) \to C_b(M_F)$ so $F_\epsilon = P_{t_1-\epsilon} \psi_1 \in C_b(M_F)$ since $\psi_1 \in C_b(M_F)$. Suppose it holds for $m-1$, then
\begin{eqnarray*}
F_\epsilon(\mu)&=&E_\mu \Big( \prod_{i=1}^m \psi_i(X_{t_i-\epsilon}) \Big)\\
&=& E_\mu \Big[ \prod_{i=1}^{m-2} \psi_i(X_{t_i-\epsilon}) \cdot E_\mu \Big( \psi_{m-1}(X_{t_{m-1}-\epsilon}) \psi_m(X_{t_m-\epsilon}) \big| \cF_{t_{m-1}-\epsilon}^X \Big)\Big]\\
&=& E_\mu \Big[ \prod_{i=1}^{m-2} \psi_i(X_{t_i-\epsilon}) \cdot \psi_{m-1}(X_{t_{m-1}-\epsilon}) P_{t_m-t_{m-1}} \psi_m(X_{t_{m-1}-\epsilon})  \Big]\\
&=& E_\mu \Big[ \prod_{i=1}^{m-2} \psi_i(X_{t_i-\epsilon})  \cdot \tilde{\psi}_{m-1}(X_{t_{m-1}-\epsilon})  \Big]
\end{eqnarray*}
where $\tilde{\psi}_{m-1} $ defined to be $\psi_{m-1} P_{t_m-t_{m-1}} \psi_m$ is in $C_b(M_F)$. It is reduced to the case $m-1$ where we already have $F_\epsilon \in C_b(M_F)$, so it holds for case $m$.\\ 

Therefore by the weak convergence of $(X,Z_\epsilon^{x_n})$ to $(X,Z)$, we have \[ \lim_{n\to \infty} E \Big[F_\epsilon(X_\epsilon) \cdot \phi_0(Z_\epsilon^{x_n})\Big]=\tilde{E} \Big[F_\epsilon(X_\epsilon) \cdot \phi_0(Z)\Big]\] and hence
\begin{eqnarray*}
I &=& \lim_{n\to \infty} E \Big[E_{X_\epsilon} \Big( \prod_{i=1}^m \psi_1(X_{t_i-\epsilon})\Big) \cdot \phi_0(Z_\epsilon^{x_n})\Big]\\
&=& \lim_{n\to \infty} E \Big[F_\epsilon(X_\epsilon) \cdot \phi_0(Z_\epsilon^{x_n})\Big]=\tilde{E} \Big[F_\epsilon(X_\epsilon) \cdot \phi_0(Z)\Big]\\
&=&\tilde{E} \Big[\tilde{E}_{X_\epsilon} \Big( \prod_{i=1}^m \psi_1(X_{t_i-\epsilon})\Big) \cdot \phi_0(Z)\Big]\\
&=& \tilde{E} \Big[\tilde{E}  \Big( \psi_1(X_{t_1})\cdots \psi_m(X_{t_m})  \big| \tilde{\cF}_\epsilon^X \Big)  \cdot \phi_0(Z)\Big]
\end{eqnarray*}
Let $\epsilon \to 0$, by martingale convergence we have \[\tilde{E}  \Big( \psi_1(X_{t_1})\cdots \psi_m(X_{t_m})  \big| \tilde{\cF}_\epsilon^X \Big) \xrightarrow[]{L^1 } \tilde{E}  \Big( \psi_1(X_{t_1})\cdots \psi_m(X_{t_m}) \big| \tilde{\cF}_{0+}^X\Big)=\tilde{E}  \Big( \psi_1(X_{t_1})\cdots \psi_m(X_{t_m}) \Big).\] The equality follows from Blumental 0-1 law that $\tilde{\cF}_{0+}^X$ is trivial. Therefore
\begin{align*}
 I=& \tilde{E} \Big[\psi_1(X_{t_1})\cdots \psi_m(X_{t_m})  \cdot \phi_0(Z)\Big]\\
 =& \lim_{\epsilon \to 0 }\tilde{E} \Big[ \tilde{E}  \Big( \psi_1(X_{t_1})\cdots \psi_m(X_{t_m})  \big| \tilde{\cF}_\epsilon^X \Big) \cdot \phi_0(Z)\Big]\\
 =& \tilde{E} \Big[\tilde{E}  \Big( \psi_1(X_{t_1})\cdots \psi_m(X_{t_m}) \Big) \cdot  \phi_0(Z)\Big]\\ 
 =& \tilde{E} \Big( \psi_1(X_{t_1})\cdots \psi_m(X_{t_m}) \Big) \cdot \tilde{E} \phi_0(Z)
\end{align*}

The above functionals are a determining class on $C([0,\infty),M_F) \times R$ and so we get weak
convergence of $(X,Z^x_t) \to (X,Z)$ where the latter are independent.\\

(ii) Suppose we find convergence in probability for $0<t\leq \infty$, \[\frac{L_t^{x_n}-\frac{1}{|x_n|}}{(c_{3.1} \log \frac{1}{|x_n|})^{1/2}} \xrightarrow[]{\text{ P} } Z\] for some random variable $Z$, then it must converge in distribution to $Z$ as well, so $Z$ is a standard normal distributed random variable. By taking a further subsequence we may assume a.s. convergence holds:\[\frac{L_t^{x_n}-\frac{1}{|x_n|}}{(c_{3.1} \log \frac{1}{|x_n|})^{1/2}} \xrightarrow[]{\text{ a.s. } } Z.\]
By Sugitani [7], \[L_t^x-L_\epsilon^x \text{ is continuous in } x \text{ for any } 0<\epsilon<t \] with the initial condition being  $\delta_0$,  we get 
 \[\frac{L_t^{x_n}-L_\epsilon^{x_n}}{(c_{3.1} \log \frac{1}{|x_n|})^{1/2}} \to 0\text{ a.s.}. \] 
 Therefore
\begin{equation}
\frac{L_\epsilon^{x_n}-\frac{1}{|x_n|}}{(c_{3.1} \log \frac{1}{|x_n|})^{1/2}} =\frac{L_t^{x_n}-\frac{1}{|x_n|}}{(c_{3.1} \log \frac{1}{|x_n|})^{1/2}} -\frac{L_t^{x_n}-L_\epsilon^{x_n}}{(c_{3.1} \log \frac{1}{|x_n|})^{1/2}} \to Z \text{ a.s.}
\end{equation}
Because (9) holds for any $\epsilon>0$, we get \[Z \in \bigcap_{t>0} \cF_t^X=\cF_{0+}^X,\] and Blumenthal 0-1 law tells us that any event in $\cF_{0+}^X$ is an event of probability $0$ or $1$, hence $Z$ is a.s. constant. This contradicts the fact that $Z$ is standard normal. So we get a contradiction by assuming that $(L_t^{x_n}-c_3/|x_n|)/(c_{3.1} \log \frac{1}{|x_n|})^{1/2}$ converges in probability. $\hfill\square$

% Note that $\zeta$ is independent of those $B^n$, hence
% \begin{eqnarray*}
% & & P(|B_{\tau_n}^n-\zeta|>1) \geq P(\zeta>1+B_{\tau_n}^n)\\
% &=& E(P(\zeta>1+B_{\tau_n}^n|B_{\tau_n}^n))\\
% &=& E \int_{1+B_{\tau_n}^n}^\infty p_1(x) dx.
% \end{eqnarray*}
% Fatou's Lemma implies
% \begin{eqnarray*}
%  & & \liminf_{n\to \infty} E \int_{1+B_{\tau_n}^n}^\infty p_1(x) dx \\
%  &\geq& E \int_{1+\zeta}^\infty p_1(x) dx \geq E\int_{0}^{\infty} p_1(x)dx 1_{\{\zeta<-1\}}\\
%  &=& \frac{1}{2} P(\zeta<-1)>0.
% \end{eqnarray*}

%so the independence of $Z$ and $(M_t(\phi))_{t\geq 0, \phi \in \mathcal{C}_b^2 (\mathcal{R}^d)}$ will imply the independence of $Z$ and $X$.

\subsection{Proof of Proposition 1 and 2}
% The proof of Proposition 2 are quite similar with the proof of Proposition 1 and even eaiser, so I believe it's enough to give a detailed proof of the first one and the second one is left to the readers.

\subsubsection{Some useful lemmas}

\begin{lemma}
For any $0<\alpha<3$, there exists a constant $C=C(\alpha)$ such that for any $x\neq 0$ and $t>0$, 
\[
\int_{\R^3} p_t(y) \frac{1}{|y-x|^\alpha} dy<C \frac{1}{|x|^\alpha}.
\]
\end{lemma}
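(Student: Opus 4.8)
The plan is to split the integral according to the distance of $y$ from the singularity at $x$, cutting at $|y-x| = |x|/2$, and to treat the two pieces by completely different mechanisms. The target factor $1/|x|^\alpha$ should come out of the region far from $x$, where $|y-x|^{-\alpha}$ is controlled, while the Gaussian decay of $p_t$ must absorb the local singularity of $|y-x|^{-\alpha}$ near $x$.

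First I would handle the outer region $\{|y-x| \ge |x|/2\}$. There $|y-x|^{-\alpha} \le (2/|x|)^\alpha$, so pulling this bound out and using $\int_{\R^3} p_t(y)\,dy = 1$ gives
\[
\int_{|y-x|\ge |x|/2} p_t(y)\,\frac{1}{|y-x|^\alpha}\,dy \le \frac{2^\alpha}{|x|^\alpha},
\]
with no dependence on $t$ at all. Next I would handle the inner region $\{|y-x| < |x|/2\}$. The key observation is that here $y$ is bounded away from the origin: $|y| \ge |x| - |y-x| > |x|/2$, so $p_t(y) \le (2\pi t)^{-3/2} e^{-|x|^2/(8t)}$ uniformly. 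Pulling this constant out, the remaining integral $\int_{|y-x|<|x|/2} |y-x|^{-\alpha}\,dy$ is finite precisely because $\alpha < 3$; in spherical coordinates it equals $\frac{4\pi}{3-\alpha}(|x|/2)^{3-\alpha}$. Collecting these,
\[
\int_{|y-x|<|x|/2} p_t(y)\,\frac{1}{|y-x|^\alpha}\,dy \le \frac{4\pi}{(3-\alpha)2^{3-\alpha}}\,(2\pi t)^{-3/2}\,e^{-|x|^2/(8t)}\,|x|^{3-\alpha}.
\]

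The one remaining point — which I expect to be the only real obstacle, since everything else is uniform in $t$ by construction — is to show that this $t$-dependent prefactor is itself $\le C(\alpha)/|x|^\alpha$ uniformly in both $t$ and $x$. This is where the competition between the two scales $\sqrt{t}$ and $|x|$ is resolved. Writing $s = |x|^2/t$, one has $(2\pi t)^{-3/2} e^{-|x|^2/(8t)} |x|^3 = (2\pi)^{-3/2} s^{3/2} e^{-s/8}$, and the single-variable function $s \mapsto s^{3/2} e^{-s/8}$ is continuous on $[0,\infty)$ and vanishes at both ends, hence is bounded by an absolute constant. Factoring one power out of $|x|^{3-\alpha} = |x|^{-\alpha}|x|^3$ then yields the desired bound on the inner region, and adding the two regions gives Lemma 4.

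As a consistency check, the same conclusion can be seen at once by the substitution $y = |x|w$, $\tau = t/|x|^2$, which reduces the claim to the scale-free uniform bound $\sup_{\tau>0}\int_{\R^3} p_\tau(w)\,|w-e|^{-\alpha}\,dw < \infty$ for a unit vector $e$; but the explicit split above is what makes the constant $C(\alpha)$ transparent, so I would carry out that version.
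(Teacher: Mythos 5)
Your proof is correct and follows essentially the same route as the paper: the same split at $|y-x|=|x|/2$, the same observation that $|y|>|x|/2$ on the inner ball so the Gaussian factor can be pulled out, and the same scaling substitution $s=|x|^2/t$ to bound the resulting prefactor uniformly in $t$. No issues.
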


\begin{proof}
Fix $\delta=|x|/2$,
\begin{eqnarray*}
 & & \int_{\R^3} p_t(y) \frac{1}{|y-x|^\alpha} dy \\
 &\leq& \frac{1}{\delta^\alpha}+  \int_{|y-x|<\delta} p_t(y) \frac{1}{|y-x|^\alpha} dy.
\end{eqnarray*}
For $|y-x|<\delta$, we have $|y|\geq |x|-|y-x|>|x|-\delta=\delta$, therefore
\begin{eqnarray*}
& & \int_{|y-x|<\delta} (\frac{1}{2\pi t})^{3/2}  e^{-\frac{|y|^2}{2t}} \frac{1}{|y-x|^\alpha} dy\\
 &\leq& \int_{|y-x|<\delta} (\frac{1}{2\pi t})^{3/2}  e^{-\frac{\delta^2}{2t}} \frac{1}{|y-x|^\alpha} dy\\
 &=& (\frac{1}{2\pi t})^{3/2} e^{-\frac{\delta^2}{2t}} \int_0^\delta \frac{1}{r^\alpha} r^2  dr \cdot 4\pi\\
 &=& 4\pi  M(\delta) \cdot \frac{1}{3-\alpha} \delta^{3-\alpha}
\end{eqnarray*}
where 
\[M(\delta):=\sup_{t> 0} (\frac{1}{2\pi t})^{3/2} e^{-\frac{\delta^2}{2t}}
\overset{\underset{  u=\frac{\delta^2}{t}  }{}}{=}\frac{1}{\delta^3} \sup_{u\geq 0} (\frac{u}{2\pi})^{3/2} e^{-u/2}:=C_0 \frac{1}{\delta^3}.\]
Therefore 
\begin{equation*}
\int_{\R^3} p_t(y) \frac{1}{|y-x|^\alpha} dy < \frac{1}{\delta^\alpha}+4  \pi \cdot C_0 \frac{1}{\delta^3} \cdot \frac{1}{3-\alpha} \delta^{3-\alpha} =C(\alpha) \frac{1}{|x|^\alpha}.
\end{equation*}
\end{proof}

\begin{corollary}
For any $0<\alpha<3$, there exists a constant $C=C(\alpha)$ such that for any $x\neq 0$ and $t>0$,
\[
E\int_0^t \int \frac{1}{|y-x|^\alpha} X_s(dy) ds =\int_0^t ds \int_{\R^3} p_s(y) \frac{1}{|y-x|^\alpha} dy<C \frac{1}{|x|^\alpha} t.
\]
\end{corollary}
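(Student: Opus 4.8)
The plan is to reduce the statement to Lemma 4 via the first-moment formula for super-Brownian motion together with Tonelli's theorem, so that essentially all the analytic work is already contained in Lemma 4.

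First I would apply the first-moment formula $E_{X_0} X_s(\phi) = X_0(P_s\phi)$ with $X_0=\delta_0$ and the nonnegative test function $\phi(y)=1/|y-x|^\alpha$. Since $\delta_0(P_s\phi)=P_s\phi(0)=\int p_s(z)\,|z+x|^{-\alpha}\,dz$ and $p_s$ is symmetric, the change of variables $z\mapsto -z$ gives
\[
E\,X_s(\phi)=\int_{\R^3} p_s(y)\frac{1}{|y-x|^\alpha}\,dy.
\]
Because the integrand $1/|y-x|^\alpha$ is nonnegative, Tonelli's theorem permits interchanging the expectation with the time integral $\int_0^t ds$, which yields the asserted identity
\[
E\int_0^t \int \frac{1}{|y-x|^\alpha}\,X_s(dy)\,ds=\int_0^t ds \int_{\R^3} p_s(y)\frac{1}{|y-x|^\alpha}\,dy.
\]

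Next I would invoke Lemma 4, which bounds the inner spatial integral by $C(\alpha)\,|x|^{-\alpha}$ for every $s>0$, crucially with a constant that does not depend on $s$ (or on $t$). Substituting this uniform bound and integrating the resulting constant over $s\in[0,t]$ contributes exactly the factor $t$, giving the desired estimate $C(\alpha)\,t\,|x|^{-\alpha}$ and completing the proof.

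There is no substantial obstacle once Lemma 4 is available; the only points meriting care are the justification of the Fubini/Tonelli interchange, which is automatic from nonnegativity of the integrand and simultaneously establishes finiteness of both sides, and the observation that the constant furnished by Lemma 4 is uniform in $s$, so that the time integration merely produces the linear factor $t$ rather than anything $s$-dependent.
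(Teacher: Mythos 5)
Your argument is correct and is exactly the route the paper takes: the paper's proof consists of the single line ``It directly follows from Lemma 4,'' with the first-moment formula and Tonelli interchange left implicit, and your write-up simply makes those two routine steps explicit before applying the $s$-uniform bound from Lemma 4 and integrating over $[0,t]$.
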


\begin{proof}
It directly follows from Lemma 4.
\end{proof}

\begin{lemma}
In $\R^3$, for any fixed $s>0$ and $y\neq x$, we have \[\Delta_y P_s g_x(y)= \int p_s(y-z) \frac{1}{|z-x|^2}dz.\]
\end{lemma}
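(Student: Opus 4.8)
The plan is to move the Laplacian off the heat kernel and onto $g_x$, reducing the claim to the elementary pointwise identity $\Delta_y g_x(y)=1/|y-x|^2$ together with a careful treatment of the singularity at $z=x$. First I would record that for $y\neq x$ the function $g_x(y)=\log|y-x|$ is radial about $x$, so writing $r=|y-x|$ and using $\Delta f = f''+\frac{2}{r}f'$ in $\R^3$ gives $\Delta_y g_x(y)= -1/r^2 + (2/r)(1/r)=1/|y-x|^2$. This is the classical Laplacian away from the singular point; the content of the lemma is that after smoothing by $P_s$ this pointwise formula is exactly what survives, with no extra contribution from $z=x$.

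Next I would differentiate under the integral sign. Since $p_s$ is smooth with all derivatives rapidly decaying and $g_x$ is locally integrable with at most logarithmic growth, standard domination lets me write $\Delta_y P_s g_x(y)=\int \Delta_y p_s(y-z)\, g_x(z)\,dz$, and because $p_s(y-z)$ depends only on $y-z$ we have $\Delta_y p_s(y-z)=\Delta_z p_s(y-z)$. The goal is then to integrate by parts twice to transfer the two derivatives onto $g_x$. Because $g_x$ is singular at $z=x$ I would not integrate by parts on all of $\R^3$ at once; instead I would excise a small ball and apply Green's second identity on $\Omega_\eta=\R^3\setminus B(x,\eta)$ with $u=g_x$ and $v=p_s(y-\cdot)$:
\[
\int_{\Omega_\eta}\big(g_x\,\Delta_z v - v\,\Delta_z g_x\big)\,dz
=\int_{\partial B(x,\eta)}\big(g_x\,\partial_n v - v\,\partial_n g_x\big)\,dS,
\]
the contribution from infinity being zero by the Gaussian decay of $p_s$ and its derivatives against the logarithmic growth of $g_x$.

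The main obstacle, and the only delicate point, is showing that the boundary integral over $\partial B(x,\eta)$ vanishes as $\eta\to 0$; this is exactly the step where one must rule out a Dirac contribution of the kind produced by the Newtonian potential $1/|\cdot|$ in $\R^3$. On $\partial B(x,\eta)$ one has $g_x=\log\eta$, $\partial_n g_x=-1/\eta$ (the outward normal of $\Omega_\eta$ points toward $x$), while $|v|$ and $|\partial_n v|$ are bounded since $y\neq x$, and the surface area is $4\pi\eta^2$. Hence the first boundary term is $O(\eta^2|\log\eta|)\to 0$ and the second is $\frac{1}{\eta}\int_{\partial B(x,\eta)} v\,dS \sim 4\pi\eta\,p_s(y-x)\to 0$. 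The reason no mass survives is dimensional: in $\R^3$ the function $1/|z-x|^2=\Delta_z g_x$ is locally integrable, so both $\int_{\Omega_\eta} v\,\Delta_z g_x\to\int p_s(y-z)/|z-x|^2\,dz$ and the boundary terms vanish, in contrast with $d=2$ where $\log$ is itself the fundamental solution.

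Letting $\eta\to 0$ in Green's identity then gives $\int p_s(y-z)\,\Delta_z g_x(z)\,dz=\int g_x(z)\,\Delta_z p_s(y-z)\,dz$, and since the left side equals $\int p_s(y-z)/|z-x|^2\,dz$ while the right side equals $\int \Delta_y p_s(y-z)\,g_x(z)\,dz=\Delta_y P_s g_x(y)$, the lemma follows.
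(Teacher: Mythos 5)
Your proposal is correct and follows essentially the same route as the paper: differentiate under the integral sign, excise a small ball $B(x,\eta)$ around the singularity, transfer the Laplacian onto $g_x$ by two integrations by parts (your single application of Green's second identity is the same computation), verify that both boundary terms on $\partial B(x,\eta)$ are $O(\eta^2|\log\eta|)$ and $O(\eta)$ respectively, and pass to the limit using the local integrability of $1/|z-x|^2$ in $\R^3$. The paper organizes this as named terms $I_\delta, J_\delta, K_\delta, L_\delta, M_\delta, N_\delta$, but the substance, including the observation that the would-be Dirac term $N_\delta$ vanishes in $d=3$ unlike in $d=2$, is identical.
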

\begin{proof}
Idea of this proof is from Evans [3]. For any fixed $s>0$, $p_{s}(y)=(2\pi s)^{-3/2} e^{-|y|^2/2s} \in C_0^\infty(\R^3)$, we have \[\|Dp_s\|_{L^\infty(\R^3)}<\infty \text{ and }\|\Delta p_s\|_{L^\infty(\R^3)}<\infty.\] Here $Du=D_x u=(u_{x_1},u_{x_2}, u_{x_3})$ denotes the gradient of $u$ with respect to $x=(x_1,x_2,x_3).$
\\

For any $\delta \in (0,1)$, 
\begin{eqnarray*}
& & \Delta_y \int_{\R^3} p_{s}(y-z) g_x(z)dz\\
&=&  \int_{B(x,\delta)} \Delta_y p_{s}(y-z) g_x(z)dz+\int_{\R^3-B(x,\delta)} \Delta_y p_{s}(y-z) g_x(z)dz\\
&=:& I_\delta+J_\delta.
\end{eqnarray*}
\\
Now \[|I_\delta|\leq \|\Delta p_s\|_{L^\infty(\R^3)} \int_{B(x,\delta)} |g_x(z)| dz\leq C \delta^3 |\log \delta| \to 0.\]
Note that $\Delta_y p_{s}(y-z)=\Delta_z p_{s}(y-z)$. Integration by parts yields
\begin{eqnarray*}
J_\delta &=& \int_{\R^3-B(x,\delta)} \Delta_z p_{s}(y-z) g_x(z)dz\\
&=& \int_{\partial B(x,\delta)} g_x(z) \frac{\partial p_{s}}{\partial \nu}(y-z) dz-\int_{\R^3-B(x,\delta)} D_z p_{s}(y-z) D_z g_x(z)dz\\
&=:& K_\delta+L_\delta,
\end{eqnarray*}
$\nu$ denoting the inward pointing unit normal along $\partial B(x,\delta).$ So
\[|K_\delta|\leq \|Dp_s\|_{L^\infty(\R^3)} \int_{\partial B(x,\delta)} |g_x(z)|dz \leq C \delta^2 |\log \delta| \to 0.   \]
We continue by integrating by parts again in the term $L_\delta$ to find
\begin{eqnarray*}
L_\delta &=& \int_{\R^3-B(x,\delta)} p_{s}(y-z) \Delta_z g_x(z)dz-\int_{\partial B(x,\delta)} p_{s}(y-z) \frac{\partial g_x}{\partial \nu}(z) dz\\
&=:& M_\delta+N_\delta.
\end{eqnarray*}
Now $Dg_x(z)=\frac{z-x}{|z-x|^2}(z\neq x)$ and $\nu=\frac{-(z-x)}{|z-x|}=\frac{-(z-x)}{\delta}$ on $\partial B(x,\delta)$. Hence $\frac{\partial g_x}{\partial \nu}(z)=\nu \cdot Dg_x(z)=-\frac{1}{\delta} $ on $\partial B(x,\delta)$. Since $4\pi \delta^2$ is the surface area of the sphere $\partial B(x,\delta)$ in $\R^3$, we have
\[N_\delta=4\pi \delta \cdot \frac{1}{4\pi \delta^2} \int_{\partial B(x,\delta)} p_{s} (y-z)dz \to 0\cdot p_{s}(y-x) = 0 \text{ as } \delta \to 0.\]
By direct calculation, we have $\Delta_z g_x(z)=\frac{1}{|x-z|^2}$ when $z \in \R^3-B(x,\delta)$, therefore
\[M_\delta=\int_{\R^3-B(x,\delta)} p_{s}(y-z) \frac{1}{|x-z|^2}  dz.\] 
% To make a summary of what we did above, we showed that $\Delta_y \int_{\R^3} p_{s}(y-z) g_x(z)dz \sim M_\delta$.
Lemma 4 gives  \[\int p_{s}(y-z) \frac{1}{|x-z|^2}  dz<\infty,\] by Dominated Convergence Theorem, we have \[M_\delta=\int p_{s}(y-z) \frac{1}{|x-z|^2} 1_{\{|z-x|\geq \delta\}} dz \to \int p_{s}(y-z) \frac{1}{|x-z|^2}  dz\]
as $\delta \to 0$.
\end{proof}

\subsubsection{Proof of Proposition 1}
Define $\eta\in C^\infty(\R^d)$ by \[\eta(x):=C \exp\Big(\frac{1}{|x|^2-1}\Big) 
1_{\{|x|<1\} },\] the constant $C$ selected such that $\int_{\R^d} \eta dx=1$.\\
Let $\chi_{n}$ be the convolution of $\eta$ and the indicator function of the ball $B_{n}=\{x: |x|<n\}$, we get
\[\chi_{n} (x)=\int_{\R^d} 1_{\{|x-y|<n\}} \eta (y) dy=\int_{B_{1}} 1_{\{|x-y|<n\}} \eta(y) dy.\]
It is known that $\chi_{n}$ is a $C^{\infty}$ function with support in $B_{n+1}$ and for $x\in B_{n-1}$, we have $|x-y|<n$ since $|x|<n-1$ and $|y|<1$, so \[\chi_{n} (x)=\int_{B_{1}} 1_{\{|x-y|<n\}} \eta(y) dy=\int_{B_{1}} \eta(y) dy=1.\] It's easy to see that $\chi_n$ increases to $1$ as $n$ goes to infinity.

Recall that $g_x(y)=\log|y-x|$ and let $g_{n,x}(y)=g_x(y) \cdot \chi_n(y-x)$, then
\begin{align*}
    P_{\epsilon} g_{n,x}(z)=&\int_{|y-x|<n-1} p_\epsilon(z-y) \log |y-x| dy\\
   &+ \int_{n-1<|y-x|<n+1} p_\epsilon(z-y) \log |y-x| \chi_n(y-x) dy \in C_b^{2},
\end{align*}
and
\begin{align*}
    \Delta_z P_{\epsilon} g_{n,x}(z)=&\int_{|y-x|<n-1}  \Delta_z p_\epsilon(z-y) \log |y-x| dy\\
   &+ \int_{n-1<|y-x|<n+1}  \Delta_z p_\epsilon(z-y) \log |y-x| \chi_n(y-x) dy.
\end{align*} 
It is easy to see that $P_{\epsilon} g_{n,x}(z)$ and $\Delta_z P_{\epsilon} g_{n,x}(z)$ increases to $P_{\epsilon} g_{x}(z)$ and $\Delta_z P_{\epsilon} g_{x}(z)$ respectively.\\

For $P_{\epsilon} g_{n,x} \in C_b^2(\R^3)$, we have following equation hold a.s., 
 \begin{equation*}
 X_t(P_{\epsilon} g_{n,x} )=\delta_0(P_{\epsilon} g_{n,x})+ M_t(P_{\epsilon} g_{n,x})+\int_0^t X_s(\frac{\Delta}{2} P_{\epsilon} g_{n,x})  ds,
 \end{equation*}
 where $M_t(P_{\epsilon} g_{n,x})$ is a martingale with quadratic variation \[[M(P_{\epsilon} g_{n,x})]_t=\int_0^t X_s\Big((P_{\epsilon} g_{n,x})^2\Big) ds.\]
 As $n$ goes to infinity, by monotone convergence, we have \[X_t(P_{\epsilon} g_{n,x} ) \to X_t(P_{\epsilon} g_{x} ),\ \delta_0(P_{\epsilon} g_{n,x})\to \delta_0(P_{\epsilon} g_{n,x}),\]and \[\int_0^t X_s(\frac{\Delta}{2} P_{\epsilon} g_{n,x})  ds \to \int_0^t X_s(\frac{\Delta}{2} P_{\epsilon} g_{x})  ds.\]
Note that 
 \begin{eqnarray*}
  & & E \int_0^t X_s\Big((P_{\epsilon} g_x)^2\Big) ds\\
  &=& \int_0^t ds \int p_s(y) dy \Big(\int p_\epsilon(y-z) \log |z-x| dz \Big)^2\\
  &\leq&  \int_0^t ds \int p_s(y) dy \int p_\epsilon(y-z) \Big(\log |z-x| \Big)^2 dz \\
  &=&  \int_0^t ds \int p_{s+\epsilon}(z)  \Big(\log |z-x| \Big)^2 dz \\
   &\leq&  \int_0^{t+\epsilon} ds \int p_{s}(z)  \Big(\log |z-x| \Big)^2 dz <\infty.
 \end{eqnarray*}
The last is by $(\star)$ in Lemma 3 when calculating $E(M_t^2(g_x))$. So we conclude that \[E\Big[\Big(M_t(P_{\epsilon} g_{n,x})-M_t(P_{\epsilon} g_x)\Big)^2\Big]=E\int_0^t X_s\Big((P_{\epsilon}g_{n,x}-P_{\epsilon}g_x)^2\Big)ds \to 0\] by Dominated Convergence Theorem since \[(P_{\epsilon}g_{n,x}-P_{\epsilon}g_x)^2 \to 0 \text{ and } (P_{\epsilon}g_{n,x}-P_{\epsilon}g_x)^2 \leq 4 (P_{\epsilon} g_x)^2.\]
So the $L^2$ convergence of a martingale $M_t(P_{\epsilon} g_{n,x})$to $M_t(P_{\epsilon} g_{x})$ follows, which makes $M_t(P_{\epsilon} g_{x})$ a martingale as well. By taking a subsequence we have the following equation holds a.s.
  \begin{equation}
 X_t(P_{\epsilon} g_x)=\delta_0(P_{\epsilon} g_x)+ M_t(P_{\epsilon} g_x)+\int_0^t X_s(\frac{\Delta}{2} P_{\epsilon} g_x) ds,
 \end{equation}
 where $M_t(P_{\epsilon} g_x)$ is a martingale with integrable quadratic variation \[[M(P_{\epsilon} g_x)]_t=\int_0^t X_s\Big((P_{\epsilon} g_x)^2\Big) ds.\]
%  Since $E [M(P_{\epsilon} g_x)]_t<\infty$, we have $M_t^2(P_{\epsilon} g_x)-[M(P_{\epsilon} g_x)]_t$ is a martingale by Burkholder inequality that \[E\Big[(\sup_{s\leq t} |M_s|)^2\Big] \leq C E\big([M]_t\big) \]

Let $\epsilon$ goes to $0$, we will show in (i)-(iv) the $L^1$ convergence of each term in (10) to the corresponding term in Proposition 1, i.e. \[X_t(g_x)=\delta_0(g_x)+ M_t(g_x)+\frac{1}{2} \int_0^t \int \frac{1}{|y-x|^2} X_s(dy) ds.\]
(i)

First we have 
\begin{eqnarray*}
& & \bigg|\delta_0(P_\epsilon g_x)-\delta_0(g_x)\bigg|=\bigg|\int_{\R^2}  p_\epsilon(y) \log|y-x| dy-\log|x| \bigg|\\
&\leq & \int_{\R^3}  p_\epsilon(y) \Big|\log|y-x| -\log|x|\Big| dy=E\Big[\Big|\log|B_\epsilon-x|-\log|x|\Big|\Big]\\
&=& E\Big[\Big|\log\frac{|B_\epsilon-x|}{|x|}\Big|\Big].
\end{eqnarray*}

As a result,
\begin{eqnarray*}
 & & \bigg|\delta_0(P_\epsilon g_x)-\delta_0(g_x)\bigg| \leq E\Big[\Big|\log\frac{|B_\epsilon-x|}{|x|}\Big|\Big]\\ 
 &\leq& E \Big[\sqrt[]{\frac{|B_\epsilon|}{|x|}}\Big]+E\Big[\sqrt[]{\frac{|B_\epsilon|}{|B_\epsilon-x|}}\Big] \text{ by Lemma 1}\\
 &\leq& \frac{1}{|x|^{\frac{1}{2}}} E|B_\epsilon|^{\frac{1}{2}} + \Big(E|B_\epsilon|\Big)^{\frac{1}{2}}\cdot \Big(E\frac{1}{|B_\epsilon-x|}\Big)^{\frac{1}{2}} \\
 &\leq & \frac{1}{|x|^{\frac{1}{2}}} E|B_\epsilon|^{\frac{1}{2}} + \Big(E|B_\epsilon|\Big)^{\frac{1}{2}}\cdot \Big(C \frac{1}{|x|}\Big)^{\frac{1}{2}} \text{ by Lemma 4}\\
 &\to& 0 \text{ as } \epsilon \to 0.
\end{eqnarray*}
(ii)

Let $B_t$ and $B_t^{'}$ be two independent standard Brownian motion in $\R^3$, 
\begin{eqnarray*}
& & E\Big[\Big|X_t(P_\epsilon g_x)-X_t(g_x)\Big|\Big] \leq E \Big[X_t\Big(|P_\epsilon g_x-g_x|\Big)\Big]\\
&=&  \int p_t(y) dy \bigg|\int p_\epsilon(z) \log|z-(y-x)|dz-\log|y-x|\bigg|\\
&\leq&  \int p_t(y) dy \int p_\epsilon(z) \bigg|\log|z-(y-x)|-\log|y-x|\bigg| dz\\
&=&  E\Bigg(\Big|\log \frac{|B'_\epsilon-(B_t-x)|}{|B_t-x|}\Big|\Bigg)\\
&\leq&   E \Big[\sqrt{\frac{|B'_\epsilon| }{|B_t-x|}}\Big]+ E \Big[\sqrt{\frac{|B'_\epsilon|}{|B'_\epsilon+B_t-x|}}\Big] .
\end{eqnarray*}
Since $E\sqrt[]{|B'_\epsilon|} \to 0$ and by Lemma 4
\[E \sqrt{\frac{|B'_\epsilon| }{|B_t-x|}}= E \sqrt[]{|B'_\epsilon|} \cdot E\sqrt[]{\frac{1}{|B_t-x|}} \leq E \sqrt[]{|B'_\epsilon|} \cdot C\frac{1}{|x|^{\frac{1}{2}}}\to 0.\]
For the second term, we use Cauchy Schwarz Inequality,
\[\bigg(E \sqrt{\frac{|B'_\epsilon|}{|B'_\epsilon+B_t-x|}}\bigg)^2 \leq E|B'_\epsilon| \cdot E\frac{1}{|B'_\epsilon+B_t-x|}= E|B'_\epsilon| \cdot E\frac{1}{|B_{t+\epsilon}-x|}.\]
So again by Lemma 4\[E \sqrt{\frac{|B'_\epsilon|}{|B'_\epsilon+B_t-x|}} \leq \bigg(E|B'_\epsilon|\bigg)^{1/2} \cdot \bigg(C \frac{1}{|x|^{\frac{1}{2}}}\bigg)^{1/2} \to 0 \text{ as } \epsilon \to 0.\]
and the $L^1$ convergence of $X_t(P_\epsilon g_x)$ to $X_t(g_x)$ follows.\\
\\
(iii)

Next we deal with $M_t(P_\epsilon g_x)-M_t(g_x)$ and we use its quadratic variation to compute its second moment.
\begin{eqnarray*}
& & \Big(E|M_t(P_\epsilon g_x)-M_t(g_x)|\Big)^2 \leq E \Big[\Big(M_t(P_\epsilon g_x)-M_t(g_x)\Big)^2\Big]\\
&=& E \int_0^t X_s\Big((P_\epsilon g_x-g_x)^2\Big) ds\\
&=& \int_0^t ds \int p_s(y) dy \bigg( \int p_\epsilon(z)  \Big(\log|z+y-x| -\log|y-x|\Big) dz\bigg)^2 \\
&\leq& \int_0^t ds \int p_s(y) dy \int p_\epsilon(z)  \Big(\log|
z+y-x| -\log|y-x|\Big)^2 dz \\
&=& \int_0^t E \Big[\Big(\log |B_\epsilon^{'}+B_s-x|-\log |B_s-x|\Big)^2\Big] ds.
\end{eqnarray*}
By Lemma 1 we get
\begin{eqnarray*}
 & &  \int_0^t E \Big[\Big(\log \frac{|B_\epsilon^{'}+B_s-x|}{|B_s-x|}\Big)^2\Big] ds\\
 &\leq&  \int_0^t  E \Big[\Big(\sqrt{\frac{|B_\epsilon^{'}|}{|B_s-x|}}+\sqrt{\frac{|B_\epsilon^{'}|}{|B'_\epsilon+B_s-x|}}\Big)^2\Big] ds \\
 &\leq& 2 \int_0^t  E \Big({\frac{|B_\epsilon^{'}|}{|B_s-x|}}\Big)+E\Big({\frac{|B_\epsilon^{'}|}{|B'_\epsilon+B_s-x|}}\Big)ds\\
 &:=& 2I.
\end{eqnarray*}
For the first term in $I$,
\begin{eqnarray*}
 & &  \int_0^t E {\frac{|B_\epsilon^{'}|}{|B_s-x|}}ds  =E|B_\epsilon^{'}|
 \cdot \int_0^t E {\frac{1}{|B_s-x|}}ds\\
 &\leq & C \epsilon^{1/2} \int_0^t ds \int p_s(y) \frac{1}{|y-x|} dy \to 0  \text{ as } \epsilon \to 0 \ \text{ by Corollary 1} .
\end{eqnarray*}
For the second term in $I$, note that $B'_\epsilon+B_s=^d B_{s+\epsilon}$ as they are independent Brownian motion, so
\begin{eqnarray*}
 &  &  \int_0^t  E {\frac{|B_\epsilon^{'}|}{|B'_\epsilon+B_s-x|}}ds \leq \int_0^t  \Big(E\big(|B'_\epsilon|^2\big)\Big)^{1/2} \cdot \Big(E\frac{1}{|B'_\epsilon+B_s-x|^2}\Big)^{1/2} ds\\
 &=& C\epsilon^{1/2} \int_0^t\Big(E\frac{1}{|B_{s+\epsilon}-x|^2}\Big)^{1/2} ds\leq  C\epsilon^{1/2} \Big(\int_0^t  E\frac{1}{|B_{s+\epsilon}-x|^2}ds\Big)^{1/2} \cdot \Big(\int_0^t 1^2 ds\Big)^{1/2}\\
 &\leq & C\epsilon^{1/2} \cdot t^{1/2} \Big(\int_0^{t+1} ds \int p_s(y) \frac{1}{|y-x|^2} dy\Big)^{1/2} \to 0 \text{ as } \epsilon \to 0 \ \text{ by Corollary 1}.
\end{eqnarray*}
The $L^1$ convergence of $M_t(P_\epsilon g_x)$ to $M_t(g_x)$ follows.\\
\\
(iv)

For the convergence of the last term in (10), by Lemma 5 we get
\begin{eqnarray*}
& & E \Big|\int_0^t X_s\big(\frac{\Delta}{2} P_{\epsilon} g_x) ds-\frac{1}{2}\int_0^t ds \int \frac{1}{|y-x|^2} X_s(dy) \  \Big|\\
&=& \frac{1}{2} \ E \Big|\int_0^t ds \int X_s(dy) \int p_{\epsilon}(y-z) \frac{1}{|z-x|^2} dz-\int_0^t ds \int  X_s(dy) \frac{1}{|y-x|^2} \Big|\\
&\leq& \frac{1}{2} \ E \int_0^t ds \int X_s(dy) \  \Big|\int p_{\epsilon}(y-z) \frac{1}{|z-x|^2} dz- \frac{1}{|y-x|^2} \Big|\\
&=& \frac{1}{2} \int_0^t ds \int_{\R^3} \Big|\int p_\epsilon(y-z) \frac{1}{|z-x|^2}dz-\frac{1}{|y-x|^2}\Big| p_s(y) dy.
\end{eqnarray*}
$\mathit{Claim:}$ \[\Big|\int p_\epsilon(y-z) \frac{1}{|z-x|^2}dz-\frac{1}{|y-x|^2}\Big| \to 0 \text{ as } \epsilon\to 0\text{ for } y\neq x .\]
\begin{proof}
For $\xi=y-x \neq 0$, 
\begin{eqnarray*}
& & \Big|\int p_\epsilon(y-z) \frac{1}{|z-x|^2}dz-\frac{1}{|y-x|^2}\Big|\\
&=& \Big|\int p_\epsilon(z) \frac{1}{|z-(y-x)|^2}dz-\frac{1}{|y-x|^2}\Big|\\
&\leq& \int p_\epsilon(z) \Big|\frac{1}{|z-\xi|^2}-\frac{1}{|\xi|^2}\Big| dz\\
&=& E\bigg(\Big|\frac{1}{|B_\epsilon-\xi|^2}-\frac{1}{|\xi|^2}\Big|\bigg)\\
&=& E \bigg(\Big||B_\epsilon-\xi|-|\xi|\Big| \cdot \Big(\frac{|B_\epsilon-\xi|+|\xi|}{|B_\epsilon-\xi|^2 |\xi|^2}\Big)\bigg)\\
&\leq&  E \bigg(|B_\epsilon| \cdot \Big(\frac{|B_\epsilon-\xi|+|\xi|}{|B_\epsilon-\xi|^2 |\xi|^2}\Big)\bigg)\\
&=& E\bigg(|B_\epsilon|\cdot \frac{1}{|B_\epsilon-\xi|^2 |\xi|}\bigg)+E\bigg( |B_\epsilon| \cdot \frac{1}{|B_\epsilon-\xi|\ |\xi|^2}\bigg).
\end{eqnarray*}
For the first term, we use Holder's inequality with $1/p=1/5$ and $1/q=4/5$ to get 
\begin{eqnarray*}
& & E\Big(|B_\epsilon|\cdot \frac{1}{|B_\epsilon-\xi|^2 |\xi|}\Big)\leq \frac{1}{|\xi|} \cdot \Big(E(|B_\epsilon|^5)\Big)^{1/5} \cdot \Big(E\big((\frac{1}{|B_\epsilon-\xi|^2 })^{5/4}\big)\Big)^{4/5}.
\end{eqnarray*}
By Lemma 4, we have \[E\frac{1}{|B_\epsilon-\xi|^{5/2}} \leq C\cdot |\xi|^{-\frac{5}{2}}<\infty,\] 
so
\[E\Big(|B_\epsilon|\cdot \frac{1}{|B_\epsilon-\xi|^2 |\xi|}\Big) \leq \frac{1}{|\xi|} \Big(C |\xi|^{-\frac{5}{2}}\Big)^{4/5} \Big(E|B_\epsilon|^5\Big)^{1/5} \to 0 \text{ as } \epsilon \to 0.\]
Similarly \[E \Big(|B_\epsilon| \cdot \frac{1}{|B_\epsilon-\xi| |\xi|^2}\Big) \to 0.\]
\end{proof}
Note that we have just proved that \[|\int p_\epsilon(y-z) \frac{1}{|z-x|^2}dz-\frac{1}{|y-x|^2}| \to 0\]
almost everywhere ($y\neq x$) as $\epsilon \to 0$. Corollary 1 gives us \[\int_0^t ds \int_{\R^3} p_s(y) \frac{1}{|y-x|^2} dy<\infty,\] and by Lemma 4 \[\Big|\int p_\epsilon(y-z) \frac{1}{|z-x|^2}dz-\frac{1}{|y-x|^2}\Big| \leq (C+1) \frac{1}{|y-x|^2} \text{ for all } \epsilon,\]
by Dominated Convergence Theorem, \[\int_0^t ds \int_{\R^3} \Big|\int p_\epsilon(y-z) \frac{1}{|z-x|^2}dz-\frac{1}{|y-x|^2}\Big| p_s(y) dy \to 0,\] and we proved that \[\int_0^t X_s(\frac{\Delta}{2} P_{\epsilon} g_x)ds \xrightarrow[]{L^1}  \int_0^t \int \frac{1}{|y-x|^2} X_s(dy) ds.\]
\\
(v) Combining (i)-(iv), we build the $L^1$ convergence of each term in (10) to the corresponding term in (4), therefore (4) holds a.s. and the proof of Proposition 1 is done.$\hfill\square$

\subsubsection{Proof of Proposition 2}
Let $h_{\epsilon,x} (y)=\sqrt[]{|y-x|^2+\epsilon}$, then \[\nabla h_{\epsilon,x} (y)=\frac{y-x}{\sqrt[]{|y-x|^2+\epsilon}}\] and \[\Delta h_{\epsilon,x} (y)=\frac{(d-1)|y-x|^2+d \epsilon}{(|y-x|^2+\epsilon)^{3/2}}.\]
By Ito's Lemma, we have
\begin{eqnarray*}
\sqrt[]{|B_t-x|^2+\epsilon}=\sqrt[]{|x|^2+\epsilon}&+&\int_0^t \frac{B_s-x}{\sqrt[]{|B_s-x|^2+\epsilon}}\cdot dB_s\\
&+& \frac{1}{2} \int_0^t \frac{(d-1)|B_s-x|^2+d \epsilon}{(|B_s-x|^2+\epsilon)^{3/2}} ds
\end{eqnarray*}
Let $H_s=\frac{B_s-x}{\sqrt[]{|B_s-x|^2+\epsilon}}$, then \[M_t^\epsilon:=\int_0^t \frac{B_s-x}{\sqrt[]{|B_s-x|^2+\epsilon}}\cdot dB_s \in cM_{0,\text{loc}}.\] Since \[E[M^\epsilon]_t=E \int_0^t \frac{|B_s-x|^2}{|B_s-x|^2+\epsilon}ds \leq E \int_0^t 1 ds=t<\infty,\] then $M^\epsilon$ is a martingale and hence by taking expectation
\begin{equation}
E \sqrt[]{|B_t-x|^2+\epsilon}=\sqrt[]{|x|^2+\epsilon}+\frac{1}{2} \int_0^t E \frac{(d-1)|B_s-x|^2+d \epsilon}{(|B_s-x|^2+\epsilon)^{3/2}} ds.
\end{equation}
By Fatou's Lemma,
\begin{eqnarray*}
& & \frac{1}{2}\int_0^t E \frac{d-1}{|B_s-x|} ds= \frac{1}{2} \int_0^t  E \  \lim_{\epsilon \to 0} \frac{(d-1)|B_s-x|^2+d \epsilon}{(|B_s-x|^2+\epsilon)^{3/2}} ds\\
&\leq&  \frac{1}{2}\int_0^t  \ \liminf_{\epsilon \to 0} E\frac{(d-1)|B_s-x|^2+d \epsilon}{(|B_s-x|^2+\epsilon)^{3/2}} ds\\
&\leq&  \liminf_{\epsilon \to 0} \frac{1}{2} \int_0^t  \  E\frac{(d-1)|B_s-x|^2+d \epsilon}{(|B_s-x|^2+\epsilon)^{3/2}} ds\\
&=& \liminf_{\epsilon \to 0} \Big[E \sqrt[]{|B_t-x|^2+\epsilon}- \sqrt[]{|x|^2+\epsilon}\Big] \text{ by (11)}\\
&=& E |B_t-x|-|x|\leq E|B_t|.
\end{eqnarray*}
The last equality is from 
\[0\leq \sqrt[]{|x|^2+\epsilon} -|x| \leq \sqrt[]{\epsilon} \to 0,\]
and
\begin{eqnarray*}
0 &\leq& E \sqrt[]{|B_t-x|^2+\epsilon}-E|B_t-x|\\
&\leq& E\Big[|B_t-x|+\sqrt[]{\epsilon}\Big]-E|B_t-x|=\sqrt[]{\epsilon} \to 0.
\end{eqnarray*}
So 
\begin{eqnarray*}
&& \int_0^t \int \frac{1}{|y-x|} p_s(y) dy ds=\int_0^t E \frac{1}{|B_s-x|} ds \leq\frac{2}{d-1} E|B_t|<\infty.\\
\end{eqnarray*}
$\hfill\square$
\section{Proof of Theorem 2}
To prove Theorem 2, we need the Tanaka formula for $d=2$, which are stated below and the proof will follow after the proof of Theorem 2.

\begin{proposition}(Tanaka formula for d=2) 
Let $c_2=1/\pi$ and $g_x(y)= \log|y-x|$, where $x\neq 0$. Then we have a.s. that
\begin{equation}
L_t^x=c_2\Big[X_t(g_x)-\delta_0 (g_x)-M_t(g_x)\Big].
\end{equation}
\end{proposition}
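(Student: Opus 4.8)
The plan is to follow the proof of Proposition 1 almost step for step: mollify $g_x$ by the heat semigroup, truncate it to a compactly supported $C_b^2$ function so that the martingale problem (1) applies, and then remove the truncation ($n\to\infty$) and the mollification ($\epsilon\to0$), checking $L^1$ convergence of each term. The one genuinely new feature in $d=2$ is the form of the drift term. Since $g_x(y)=\log|y-x|$ is harmonic away from $x$ in the plane, one has $\Delta_z g_x(z)=0$ for $z\neq x$, and the distributional Laplacian of $g_x$ is a multiple of $\delta_x$. Consequently the drift will converge not to an ordinary integral, as in the $d=3$ case, but to a multiple of the local time $L_t^x$.

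First I would establish the $d=2$ analogue of Lemma 5, namely that for fixed $s>0$ and $y\neq x$,
\[
\tfrac12\,\Delta_y P_s g_x(y)=\pi\,p_s(y-x)=\pi\,p_s^x(y).
\]
The computation repeats Lemma 5 through the two integrations by parts, writing $\Delta_y P_s g_x(y)=I_\delta+K_\delta+M_\delta+N_\delta$. The terms $I_\delta$ and $K_\delta$ still vanish as $\delta\to0$, and now the interior term $M_\delta=\int_{\R^2-B(x,\delta)}p_s(y-z)\Delta_z g_x(z)\,dz$ vanishes identically because $\Delta_z g_x\equiv0$ off $x$ in $d=2$. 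The decisive change is the boundary term: on $\partial B(x,\delta)$ one again has $\partial g_x/\partial\nu=-1/\delta$, but the perimeter of the circle is $2\pi\delta$, so
\[
N_\delta=\tfrac1\delta\int_{\partial B(x,\delta)}p_s(y-z)\,dz
=2\pi\Big(\tfrac1{2\pi\delta}\int_{\partial B(x,\delta)}p_s(y-z)\,dz\Big)\to 2\pi\,p_s(y-x)
\]
as $\delta\to0$, by continuity of $p_s$ at $y-x\neq0$. Hence $\Delta_y P_s g_x(y)=2\pi p_s(y-x)$ and the displayed identity follows. I expect this to be the main obstacle and the heart of the argument: it is exactly where the local time is created, replacing the honest integral $\tfrac12\int_0^t\int|y-x|^{-2}X_s(dy)\,ds$ of the $d=3$ case.

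With this lemma in hand the rest runs as in Proposition 1. Applying (1) to the truncated mollified functions $P_\epsilon g_{n,x}\in C_b^2(\R^2)$ and letting $n\to\infty$ gives, a.s.,
\[
X_t(P_\epsilon g_x)=\delta_0(P_\epsilon g_x)+M_t(P_\epsilon g_x)+\int_0^t X_s\big(\tfrac\Delta2 P_\epsilon g_x\big)\,ds,
\]
where, by the analogue of Lemma 5, $\tfrac\Delta2 P_\epsilon g_x=\pi p_\epsilon^x$, so the drift equals $\pi\int_0^t X_s(p_\epsilon^x)\,ds$. Letting $\epsilon\to0$, this term converges to $\pi L_t^x$ directly from the definition $L_t^x=\lim_{\epsilon\to0}\int_0^t X_s(p_\epsilon^x)\,ds$. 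The convergences $\delta_0(P_\epsilon g_x)\to\delta_0(g_x)$, $X_t(P_\epsilon g_x)\to X_t(g_x)$ and $M_t(P_\epsilon g_x)\to M_t(g_x)$ are obtained as in parts (i)--(iii) of Proposition 1, using Lemma 1 together with the $d=2$ estimates $\int_0^t\int|y-x|^{-1}p_s(y)\,dy\,ds\leq 2E|B_t|$ from Proposition 2 (valid for $d>1$) and the $d=2$ analogue of Lemma 4, $\int_{\R^2}p_t(y)|y-x|^{-\alpha}\,dy<C|x|^{-\alpha}$ for $0<\alpha<2$, proved verbatim. Combining the four $L^1$ limits and recalling $c_2=1/\pi$, the limiting identity $X_t(g_x)=\delta_0(g_x)+M_t(g_x)+\pi L_t^x$ rearranges to the claimed Tanaka formula, which holds a.s.

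The one point that needs genuine care beyond bookkeeping is the convergence $M_t(P_\epsilon g_x)\to M_t(g_x)$, i.e. $\int_0^t ds\int p_s(y)(P_\epsilon g_x-g_x)^2\,dy\to0$. The $d=3$ argument in part (iii) bounds the cross term by Cauchy--Schwarz and then invokes $E\,[\,|B_{s+\epsilon}-x|^{-2}\,]$, which is infinite in $d=2$ since $|z-x|^{-2}$ is not locally integrable in the plane. I would avoid this by replacing Cauchy--Schwarz with Hölder's inequality at exponents $p>2$ and $q=p/(p-1)<2$, reducing the second term of the Lemma 1 bound to $E\,[\,|B_{s+\epsilon}-x|^{-q}\,]$ with $q<2$, which is finite and bounded uniformly in $s\in[0,t]$ by the $d=2$ Lemma 4; together with $E\,|B'_\epsilon|^p=C\epsilon^{p/2}\to0$ this gives the required decay. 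The first term only involves $|y-x|^{-1}$ and is controlled by Proposition 2, and since the singularity of $g_x$ is merely logarithmic all remaining estimates go through with the $|y-x|^{-1}$ and $|y-x|^{-q}$ ($q<2$) bounds already available.
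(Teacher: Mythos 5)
Your proposal follows essentially the same route as the paper: the identity $\tfrac12\Delta_y P_s g_x(y)=\pi p_s(y-x)$ you derive is exactly the paper's Lemma 7 (with the interior term vanishing by harmonicity of $\log$ and the boundary term producing $2\pi p_s(y-x)$), the truncation/mollification scheme and the $L^1$ convergences of the remaining three terms are carried out identically, and your substitution of H\"older with exponents $p>2$, $q<2$ for Cauchy--Schwarz in the martingale estimate is precisely what the paper does (it takes $p=3$, $q=3/2$). The proof is correct and matches the paper's argument in all essentials.
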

\noindent $\mathbf{Remark.}$ Barlow, Evans and Perkins [1] gives a Tanaka formula for local time of Super-Brownian Motion in $d=2$, which is
\[X_t(g_{\alpha,x})=X_0(g_{\alpha,x})+M_t(g_{\alpha,x})+\alpha \int_0^t X_s(g_{\alpha,x}) ds-L_t^x,\] for all $t\geq 0$ a.s.. Here $g_{\alpha,x}(y)$  is defined to be $\int_0^\infty e^{-\alpha t} p_t(x-y) dt$. We can see that $g_{\alpha,x}$ is not well defined for $\alpha=0$ and our result effectively extends the Tanaka formula in [1] to the $\alpha=0$ case.

% \begin{proposition}
% For $d=2$, let $h_x(y)=|y-x|$, then $\frac{\Delta}{2} h_x(y)=\frac{1}{|y-x|}$ and 
% \begin{equation*}
% <X_t,h_x>=<\delta_0,h_x>+ M_t(h_x)+\int_0^t <X_s,\frac{1}{|y-x|}>ds
% \end{equation*}
% \end{proposition}
\subsection{Proof of Theorem 2}
By (12), note that $\delta_0(g_x)=\log|x|=-\log 1/|x|$, \[L_t^x-c_2\log \frac{1}{|x|}=c_2\Big[ X_t(g_x)-M_t(g_x) \Big],\] therefore \[E\Big|L_t^x-c_2\log \frac{1}{|x|}\Big| \leq c_2 E\Big|X_t(g_x)\Big|+  c_2 E\Big|M_t(g_x)\Big|.\]
For the first term, 
\begin{eqnarray*}
& & E\Big|X_t(g_x)\Big|\leq E X_t(|g_x|)= \int p_t(y) \Big|\log |y-x|\Big| dy\\
&\leq & \int p_t(y) \frac{1}{|y-x|} 1_{\{|y-x|<1\}} dy+\int p_t(y) |y-x| 1_{\{|y-x|\geq 1\}} dy\\
&\leq & \frac{1}{2\pi t} \int \frac{1}{|y-x|} 1_{\{|y-x|<1\}} dy+\int p_t(y) (|y|+|x|) dy\\
&=& \frac{1}{2\pi t} \cdot 2\pi+|x| +E|B_t| \to \frac{1}{2\pi t} \cdot 2\pi +E|B_t|.
\end{eqnarray*}
For the second term, \[\Big(E|M_t(g_x)|\Big)^2\leq E\Big(M_t^2(g_x)\Big)=E\int_0^t X_s(g_x^2) ds,\] 
and by Lemma 1
\begin{eqnarray*}
& & E\int_0^t X_s(g_x^2) ds=\int_0^t ds \int p_s(y) (\log|y-x|)^2 dy\\
&\leq& \int_0^t ds \int p_s(y) \frac{1}{|y-x|} 1_{\{|y-x|<1\}} dy+\int_0^t ds \int p_s(y) |y-x| 1_{\{|y-x|\geq 1\}} dy\\
&\leq& \int_0^t ds \int p_s(y) \frac{1}{|y-x|} dy+\int_0^t ds \int p_s(y) (|y|+|x|)  dy.
\end{eqnarray*}
By Proposition 2 in $d=2$, \[ \int_0^t ds \int p_s(y) \frac{1}{|y-x|} dy\leq 2 E|B_t|<\infty,\] and
\[\int_0^t ds \int p_s(y) (|y|+|x|)  dy=|x| t+\int_0^t E |B_s| ds \to \int_0^t E |B_s| ds <\infty.\]
Therefore \[\limsup_{x\to 0} E\Big|L_t^x-c_2\log \frac{1}{|x|}\Big|<\infty.\]

\subsection{Proof of Proposition 3}
% The proof of Proposition 4 resembles that of Proposition 3, so we will give a detailed proof of Proposition 3 and the second one is left to the readers.

\subsubsection{Some useful lemmas}
\begin{lemma}
In $\R^2$, for $0<\alpha<2$, there exists a constant $C=C(\alpha)$ such that for any $x\neq 0$ and $t>0$, 
\[
\int_{\R^2} p_t(y) \frac{1}{|y-x|^\alpha} dy<C \frac{1}{|x|^\alpha}.
\]
\end{lemma}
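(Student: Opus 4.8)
The plan is to replicate the argument of Lemma 4 verbatim, substituting the two-dimensional heat kernel and the two-dimensional volume element; the only structural change is that the radial integral near the singularity now converges precisely because $\alpha < 2$. As before, I set $\delta = |x|/2$ and split the integral according to whether $|y-x| \geq \delta$ or $|y-x| < \delta$, writing
\[
\int_{\R^2} p_t(y) \frac{1}{|y-x|^\alpha} \, dy \leq \frac{1}{\delta^\alpha} + \int_{|y-x|<\delta} p_t(y) \frac{1}{|y-x|^\alpha} \, dy,
\]
where on the outer region I used $\frac{1}{|y-x|^\alpha} \leq \frac{1}{\delta^\alpha}$ together with $\int_{\R^2} p_t(y)\, dy = 1$.

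For the inner region $\{|y-x| < \delta\}$ I would use the triangle inequality exactly as in Lemma 4: since $|y| \geq |x| - |y-x| > |x| - \delta = \delta$, the exponential factor obeys $e^{-|y|^2/2t} \leq e^{-\delta^2/2t}$. Bounding the Gaussian prefactor by its supremum over $t$, I set
\[
M(\delta) := \sup_{t>0} \frac{1}{2\pi t}\, e^{-\delta^2/2t} = \frac{1}{\delta^2} \sup_{u \geq 0} \frac{u}{2\pi}\, e^{-u/2} =: C_0 \frac{1}{\delta^2},
\]
the substitution $u = \delta^2/t$ producing the dimension-appropriate power $\delta^{-2}$ in place of the $\delta^{-3}$ that appeared in $\R^3$. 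Passing to polar coordinates in the plane (where the circle of radius $r$ has length $2\pi r$) then gives
\[
\int_{|y-x|<\delta} \frac{1}{|y-x|^\alpha} \, dy = 2\pi \int_0^\delta r^{1-\alpha} \, dr = \frac{2\pi}{2-\alpha}\, \delta^{2-\alpha}.
\]

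Combining these two estimates yields
\[
\int_{\R^2} p_t(y) \frac{1}{|y-x|^\alpha} \, dy < \frac{1}{\delta^\alpha} + 2\pi\, C_0 \frac{1}{\delta^2} \cdot \frac{1}{2-\alpha}\, \delta^{2-\alpha} = C(\alpha) \frac{1}{|x|^\alpha},
\]
after absorbing the factor $2^\alpha$ coming from $\delta = |x|/2$ into $C(\alpha)$. There is no genuine obstacle here, the proof being routine once the heat kernel is adjusted; the single point that must be checked is the convergence of the radial integral $\int_0^\delta r^{1-\alpha}\, dr$, which holds if and only if $1 - \alpha > -1$, i.e. $\alpha < 2$. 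This is exactly the stated hypothesis and is the two-dimensional reflection of the condition $\alpha < 3$ in Lemma 4: in $\R^d$ the relevant radial exponent is $d-1-\alpha$, which must exceed $-1$, forcing $\alpha < d$.
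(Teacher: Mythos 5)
Your proof is correct and is exactly the argument the paper intends: the paper's proof of this lemma is the single line ``follows from the proof of Lemma 4 after some modification,'' and your write-up carries out precisely those modifications (the $\delta^{-2}$ prefactor in place of $\delta^{-3}$, the planar polar-coordinate factor $2\pi r$, and the convergence condition $\alpha<2$). Nothing further is needed.
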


\begin{proof}
The proof follows from the proof of Lemma 4 after some modification. 
\end{proof}

\begin{corollary}
In $\R^2$, for any $0<\alpha<2$, there exists a constant $C=C(\alpha)$ such that for any $x\neq 0$ and $t>0$
\[
E\int_0^t \int \frac{1}{|y-x|^\alpha} X_s(dy) \ ds =\int_0^t ds \int_{\R^2} p_s(y) \frac{1}{|y-x|^\alpha} dy<C \frac{1}{|x|^\alpha} t.
\]
\end{corollary}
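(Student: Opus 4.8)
The plan is to treat this exactly as Corollary 1 was treated in $\R^3$, reducing the entire statement to the uniform spatial bound now available from Lemma 6. Writing $\phi(z)=1/|z-x|^\alpha$, the first step is to establish the claimed equality from the first-moment formula for super-Brownian motion. Since $X_0=\delta_0$, the formula $E_{X_0}X_s(\phi)=X_0(P_s\phi)$ gives $E X_s(\phi)=\delta_0(P_s\phi)=P_s\phi(0)=\int p_s(y)\phi(-y)\,dy$, and because $p_s$ is even this is exactly $\int_{\R^2} p_s(y)|y-x|^{-\alpha}\,dy$. The integrand $1/|y-x|^\alpha$ is nonnegative, so Tonelli lets me interchange the expectation with the time integral and obtain
\[
E\int_0^t\int \frac{1}{|y-x|^\alpha}\,X_s(dy)\,ds=\int_0^t ds\int_{\R^2} p_s(y)\frac{1}{|y-x|^\alpha}\,dy,
\]
which is the first equality in the statement.

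For the inequality I would simply insert the estimate of Lemma 6. That lemma bounds the inner integral by $C(\alpha)|x|^{-\alpha}$ uniformly in $s>0$, so integrating this constant-in-$s$ bound over $s\in[0,t]$ produces the factor $t$ and yields $\int_0^t ds\int p_s(y)|y-x|^{-\alpha}\,dy<C(\alpha)|x|^{-\alpha}t$, as required. This is precisely the two-dimensional analogue of the reduction ``it directly follows from Lemma 4'' used for Corollary 1.

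There is essentially no obstacle here beyond Lemma 6 itself: the real work is the singular-integral estimate, which Lemma 6 carries out (by adapting the $\R^3$ argument of Lemma 4, splitting the integral at radius $\delta=|x|/2$ and using that $\int_0^\delta r^{1-\alpha}\,dr$ converges for $\alpha<2$ in $\R^2$). The only point requiring a little care is that the first-moment identity is usually stated for bounded test functions, whereas $\phi(z)=1/|z-x|^\alpha$ is unbounded near $x$; I would justify its use by truncating, applying the identity to $\phi_n=\phi\wedge n$, and passing to the limit by monotone convergence, the finiteness of the limit being guaranteed by the uniform bound of Lemma 6 (valid exactly because $\alpha<2$).
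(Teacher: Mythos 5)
Your proposal is correct and follows exactly the same route as the paper, which disposes of this corollary with the single line ``it follows from Lemma 6'': the equality comes from the first-moment formula plus Tonelli, and the bound comes from integrating the uniform-in-$s$ estimate of Lemma 6 over $[0,t]$. Your extra care about applying the moment formula to the unbounded function via truncation and monotone convergence is a sound refinement of a step the paper leaves implicit.
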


\begin{proof}
It follows from Lemma 6.
\end{proof}

\begin{lemma}
Let $g_x(y)= \log|y-x|$, where $x, y\in \R^2$, then for any $s>0$ and $y\neq x$, we have
\[\frac{\Delta_y}{2} P_s g_x(y)= \pi  p_s(y-x).\]
\end{lemma}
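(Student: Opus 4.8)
The plan is to follow the proof of Lemma 5 almost verbatim, the essential difference being that in $\R^2$ the function $g_x(z)=\log|z-x|$ is harmonic away from $x$. First I would write $P_s g_x(y)=\int_{\R^2} p_s(y-z)\,g_x(z)\,dz$ and, for fixed $s>0$, use that $p_s\in C_0^\infty(\R^2)$ so that $\|Dp_s\|_{L^\infty(\R^2)}<\infty$ and $\|\Delta p_s\|_{L^\infty(\R^2)}<\infty$. For $\delta\in(0,1)$ I split $\Delta_y P_s g_x(y)=I_\delta+J_\delta$ according to whether $z\in B(x,\delta)$ or $z\in\R^2-B(x,\delta)$. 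The term over the small ball satisfies $|I_\delta|\le\|\Delta p_s\|_{L^\infty(\R^2)}\int_{B(x,\delta)}|\log|z-x||\,dz\le C\delta^2|\log\delta|\to 0$, using that in two dimensions $\int_0^\delta |\log r|\,r\,dr\to 0$.

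On $J_\delta$ I replace $\Delta_y p_s(y-z)$ by $\Delta_z p_s(y-z)$ and integrate by parts twice. The first integration by parts produces a boundary term $K_\delta$ over $\partial B(x,\delta)$, bounded by $\|Dp_s\|_{L^\infty(\R^2)}\cdot 2\pi\delta\,|\log\delta|\to 0$, together with a volume term $-\int_{\R^2-B(x,\delta)} D_z p_s\,D_z g_x$. The second integration by parts rewrites this as $M_\delta+N_\delta$, where $M_\delta=\int_{\R^2-B(x,\delta)} p_s(y-z)\,\Delta_z g_x(z)\,dz$ and $N_\delta=-\int_{\partial B(x,\delta)} p_s(y-z)\,\frac{\partial g_x}{\partial\nu}(z)\,dz$. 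Here the two-dimensional feature enters decisively: since $\Delta_z \log|z-x|=0$ for $z\neq x$, the volume term $M_\delta$ vanishes identically, so in contrast to Lemma 5 there is no surviving integral term.

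The entire contribution therefore comes from the boundary term $N_\delta$, which I expect to be the crux. With the inward unit normal $\nu=-(z-x)/\delta$ on $\partial B(x,\delta)$ and $Dg_x(z)=(z-x)/|z-x|^2$, one gets $\frac{\partial g_x}{\partial\nu}(z)=-1/\delta$, exactly as in $\R^3$. Since the circumference of $\partial B(x,\delta)$ in $\R^2$ is $2\pi\delta$, the factor $1/\delta$ is cancelled rather than overwhelmed, and $N_\delta=\frac{1}{\delta}\int_{\partial B(x,\delta)} p_s(y-z)\,dz=2\pi\cdot\frac{1}{2\pi\delta}\int_{\partial B(x,\delta)} p_s(y-z)\,dz\to 2\pi\,p_s(y-x)$ as $\delta\to 0$ by continuity of $p_s$. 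Collecting the limits gives $\Delta_y P_s g_x(y)=2\pi\,p_s(y-x)$, hence $\frac{\Delta_y}{2}P_s g_x(y)=\pi\,p_s(y-x)$. The main obstacle is precisely this boundary analysis: whereas in the three-dimensional Lemma 5 the $\delta^2$ surface area kills $N_\delta$ and the harmonic deficit $\Delta_z g_x=1/|z-x|^2$ supplies the answer, in two dimensions the roles are exactly reversed, and the constant $\pi$ is produced entirely by the balance between $\partial g_x/\partial\nu$ and the length of the shrinking circle.
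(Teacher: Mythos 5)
Your proposal is correct and follows essentially the same route as the paper's proof of this lemma: the same splitting into $I_\delta$ and $J_\delta$, two integrations by parts, the vanishing of the volume term by harmonicity of $\log|z-x|$ in $\R^2$, and the surviving boundary term $\frac{1}{\delta}\int_{\partial B(x,\delta)}p_s(y-z)\,dz\to 2\pi p_s(y-x)$. Your closing observation about how the roles of the volume and boundary terms are swapped relative to the three-dimensional case is exactly the point, and all the estimates you give match the paper's.
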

\begin{proof}
Idea of this proof is from Evans [3]. For any fixed $s>0$, $p_{s}(y)=(2\pi s)^{-1} e^{-|y|^2/{2s}} \in C_0^\infty(\R^2)$, we have \[\|Dp_s\|_{L^\infty(\R^2)}<\infty \text{ and } \|\Delta p_s\|_{L^\infty(\R^2)}<\infty.\]
For any $\delta \in (0,1)$, 
\begin{eqnarray*}
& & \Delta_y \int_{\R^2} p_{s}(y-z) g_x(z)dz\\
&=&  \int_{B(x,\delta)} \Delta_y p_{s}(y-z) g_x(z)dz+\int_{\R^2-B(x,\delta)} \Delta_y p_{s}(y-z) g_x(z)dz\\
&=:& I_\delta+J_\delta.
\end{eqnarray*}
Now \[|I_\delta|\leq \|\Delta p_s\|_{L^\infty(\R^2)} \int_{B(x,\delta)} |g_x(z)| dz\leq C \delta^2 |\log \delta| \to 0.\]
Note that $\Delta_y p_{s}(y-z)=\Delta_z p_{s}(y-z)$. Integration by parts yields
\begin{eqnarray*}
J_\delta &=& \int_{\R^2-B(x,\delta)} \Delta_z p_{s}(y-z) g_x(z)dz\\
&=& \int_{\partial B(x,\delta)} g_x(z) \frac{\partial p_{s}}{\partial \nu}(y-z) dz-\int_{\R^2-B(x,\delta)} D_z p_{s}(y-z) D_z g_x(z)dz\\
&=:& K_\delta+L_\delta,
\end{eqnarray*}
$\nu$ denoting the inward pointing unit normal along $\partial B(x,\delta).$ So
\[|K_\delta|\leq \|Dp_s\|_{L^\infty(\R^2)} \int_{\partial B(x,\delta)} |g_x(z)|dz \leq C \delta |\log \delta| \to 0.   \]
We continue by integrating by parts again in the term $L_\delta$ to find
\begin{eqnarray*}
L_\delta &=& \int_{\R^2-B(x,\delta)} p_{s}(y-z) \Delta_z g_x(z)dz-\int_{\partial B(x,\delta)} p_{s}(y-z) \frac{\partial g_x}{\partial \nu}(z) dz\\
&=& -\int_{\partial B(x,\delta)} p_{s}(y-z) \frac{\partial g_x}{\partial \nu}(z) dz\\
&=:& M_\delta.
\end{eqnarray*}
since $\Delta_z g_x(z)=0$ when $z$ is away from $x$.\\
\\
Now $Dg_x(z)=\frac{z-x}{|z-x|^2}(z\neq x)$ and $\nu=\frac{-(z-x)}{|z-x|}=\frac{-(z-x)}{\delta}$ on $\partial B(x,\delta)$. Hence $\frac{\partial g_x}{\partial \nu}(z)=\nu \cdot Dg_x(z)=-\frac{1}{\delta} $ on $\partial B(x,\delta)$. Since $2\pi \delta$ is the surface area of the sphere $\partial B(x,\delta)$ in $\R^2$, we have
\[M_\delta=2\pi \cdot \frac{1}{2\pi \delta} \int_{\partial B(x,\delta)} p_{s} (y-z)dz \to  2 \pi p_{s}(y-x) \text{ as } \delta \to 0.\]
Therefore we proved \[\frac{\Delta_y}{2} P_s g_x(y)= \pi p_s(y-x)= \pi p_s^x(y).\]

\end{proof}

\subsubsection{Proof of Proposition 3}
Using the same argument in proving Proposition 1 in Section 2.2.2, by a smooth cutoff $\chi_n$ of $\log$ and let $n$ goes to infinity, we have following equation hold a.s., 
 \begin{equation}
 X_t(P_{\epsilon} g_x)=\delta_0(P_{\epsilon} g_x)+ M_t(P_{\epsilon} g_x)+\int_0^t X_s\Big(\frac{\Delta}{2} P_{\epsilon} g_x\Big) ds,
 \end{equation}
 where $M_t(P_{\epsilon} g_x)$ is a martingale with quadratic variation being \[[M(P_{\epsilon} g_x)]_t=\int_0^t X_s((P_{\epsilon} g_x)^2) ds,\] and $M_t^2(P_{\epsilon} g_x)-[M(P_{\epsilon} g_x)]_t$ is also a martingale.\\
 
 Let $\epsilon$ goes to $0$, we will show the a.s. convergence of each term in (13) to the corresponding term in  Proposition 3, which is equivalent to 
\begin{equation}
X_t(g_x)=\delta_0(g_x)+M_t(g_x)+\pi L_t^x.
\end{equation}
By Lemma 7, we have \[\int_0^t X_s(\frac{\Delta}{2} P_\epsilon g_x) ds= \pi \int_0^t X_s(p_\epsilon^x) ds \xrightarrow[]{\text{a.s. }} \pi L_t^x \text{ as } \epsilon \to 0.\]
Then in (i)-(iii) we will build the $L^1$ convergence of the rest three terms in (13) to the corresponding term in (14) and we can take a subsequence along which all four terms converge a.s. and therefore (14) holds a.s.. 
\\
(i)

Let $B_t$ and $B_t^{'}$ be two independent standard Brownian motion in $\R^2$, 
\begin{eqnarray*}
& & E\Big|X_t(P_\epsilon g_x)-X_t(g_x)\Big| \leq E \Big[X_t\Big(|P_\epsilon g_x-g_x|\Big)\Big]\\
&=&  \int p_t(y) dy \Big|\int p_\epsilon(z) \log|z-(y-x)|dz-\log|y-x|\Big|\\
&\leq&  \int p_t(y) dy \int p_\epsilon(z) \Big|\log|z-(y-x)|-\log|y-x|\Big| dz\\
&=&  E\Big[\Big|\log \frac{|B'_\epsilon-(B_t-x)|}{|B_t-x|}\Big|\Big]\\
&\leq&   E \sqrt{\frac{|B'_\epsilon| }{|B_t-x|}}+ E \sqrt{\frac{|B'_\epsilon|}{|B'_\epsilon+B_t-x|}} \text{ by Lemma 1. }
\end{eqnarray*}
Since $E\sqrt[]{|B_\epsilon|} \to 0$ and  $E\sqrt[]{\frac{1}{|B_t-x|}}<\infty$ by Lemma 6,
\[E \sqrt{\frac{|B'_\epsilon| }{|B_t-x|}}= E \sqrt[]{|B'_\epsilon|} \cdot E\sqrt[]{\frac{1}{|B_t-x|}} \to 0.\]
For the second term, we use Cauchy Schwarz Inequality,
\[\bigg(E \sqrt{\frac{|B'_\epsilon|}{|B'_\epsilon+B_t-x|}}\bigg)^2 \leq E|B'_\epsilon| \cdot E\frac{1}{|B'_\epsilon+B_t-x|}= E|B'_\epsilon| \cdot E\frac{1}{|B_{t+\epsilon}-x|}.\]
So by Lemma 6 \[E \sqrt{\frac{|B'_\epsilon|}{|B'_\epsilon+B_t-x|}} \leq \bigg(E|B'_\epsilon|\bigg)^{1/2} \cdot \bigg(E\frac{1}{|B_{t+\epsilon}-x|}\bigg)^{1/2} \to 0 \text{ as } \epsilon \to 0.\]
and the $L^1$ convergence of $X_t(P_\epsilon g_x)$ to $X_t(g_x)$ follows.\\

(ii)
Similarly we have
\begin{eqnarray*}
& & \bigg|\delta_0(P_\epsilon g_x)-\delta_0(g_x)\bigg|=\bigg|\int_{\R^2}  p_\epsilon(y) \log|y-x| dy-\log|x| \bigg|\\
&\leq & \int_{\R^2}  p_\epsilon(y) \Big|\log|y-x| -\log|x|\Big| dy=E\Big|\log|B_\epsilon-x|-\log|x|\Big|\\
&\leq&  E \sqrt[]{\frac{|B_\epsilon|}{|x|}}+E\sqrt[]{\frac{|B_\epsilon|}{|B_\epsilon-x|}} \to 0.
\end{eqnarray*}
(iii) For the convergence of the martingale term $M_t(P_\epsilon g_x)$ to $M_t(g_x)$,
\begin{eqnarray*}
& & \Big(E|M_t(P_\epsilon g_x)-M_t(g_x)|\Big)^2 \leq E\Big[\Big(M_t(P_\epsilon g_x)-M_t(g_x)\Big)^2\Big]\\
&=& E \int_0^t X_s\Big((P_\epsilon g_x-g_x)^2\Big) ds\\
&=& \int_0^t ds \int p_s(y) dy \Big(\int p_\epsilon(z) (\log|z-(y-x)|-\log|y-x|) dz\Big)^2\\
&\leq&  \int_0^t ds \int p_s(y) dy \int p_\epsilon(z) \Big(\log|z-(y-x)|-\log|y-x|\Big)^2 dz\\
&=& \int_0^t  E \Bigg[\Bigg(\log \frac{|B'_\epsilon-(B_s-x)|}{|B_s-x|}\Bigg)^2\Bigg] ds\\
&\leq& \int_0^t  E \Bigg[\Bigg(\sqrt[]{\frac{|B'_\epsilon|}{|B_s-x|}}+\sqrt[]{\frac{|B'_\epsilon|}{|B'_\epsilon-(B_s-x)|}}\Bigg)^2\Bigg] ds \hfill{\text{ (by Lemma 1)}}\\
&\leq& 2 \int_0^t  E \Bigg({\frac{|B'_\epsilon|}{|B_s-x|}}+{\frac{|B'_\epsilon|}{|B'_\epsilon-(B_s-x)|}}\Bigg) ds\\
&:=& 2J.
\end{eqnarray*}
For the first term in $J$, by Corollary 2,
\begin{eqnarray*}
\int_0^t  E \frac{|B'_\epsilon| }{|B_s-x|} ds=E |B'_\epsilon| \cdot \int_0^t  E\frac{1}{|B_s-x|} ds\to 0.
\end{eqnarray*}
and for the second term in $J$, using Holder's inequality with $1/p=1/3$ and $1/q=2/3$ twice, we get
\begin{eqnarray*}
& & \int_0^t  E \frac{|B'_\epsilon| }{|B'_\epsilon+B_s-x|} ds\leq  \int_0^t  
\Big(E (|B'_\epsilon|^3)\Big)^{1/3}\cdot \Big(E\frac{1}{|B'_\epsilon+B_s-x|^{3/2}}\Big)^{2/3} ds \\
&=& \Big(E (|B'_\epsilon|^3)\Big)^{1/3} \cdot \int_0^t  \Big(E\frac{1}{|B_{s+\epsilon}-x|^{3/2}}\Big)^{2/3} ds \\
&\leq&  \Big(E (|B'_\epsilon|^3)\Big)^{1/3} \cdot \Big(\int_0^t  E\frac{1}{|B_{s+\epsilon}-x|^{3/2}} ds\Big)^{2/3} \cdot \Big(\int_0^t 1^3 ds\Big)^{1/3} \\
&\leq&  \Big(E (|B'_\epsilon|^3)\Big)^{1/3} \cdot \Big(\int_0^{t+1}  \int E \frac{1}{|B_{s}-x|^{3/2}} ds\Big)^{2/3} \cdot t^{1/3} \to 0
\end{eqnarray*}
as $\epsilon \to 0$ by Corollary 2.\\
This completes the proof of Proposition 3.$\hfill\square$

\end{document}